\documentclass[a4paper]{amsart}

\usepackage{amssymb}
\usepackage[colorlinks=true, linkcolor=blue]{hyperref}
\usepackage[all]{xypic}
%
\newif\ifFIRST\newdimen\MAXright\MAXright0pt
\def\dynkin{\mbox\bgroup\scriptsize
\let\SMALL\relax
\FIRSTtrue\hskip.5em%
\setbox1\hbox{$\diagup$}%
\setbox2\hbox{$\diagdown$}%
\setbox0\hbox to2\wd1{\hrulefill}%
\setbox8\hbox to.5\wd1{\hrulefill}%
\setbox3\hbox{$\bullet$}%
\setbox4\hbox{$\times$}%
\def\root##1{\ifFIRST\setbox5\hbox{$##1$}\ifdim\wd5>1.3em%
\hskip-.5em\hskip.5\wd5\fi\fi\FIRSTfalse%
\hskip-.25em\raise1.5\wd3\hbox to0pt{\hss\hskip.45em$%
##1$\hss}\copy3\hskip-.25em\setbox6\hbox{$##1$}%
\MAXright\wd6}%
\def\droot##1{\ifFIRST\setbox5\hbox{$##1$}\ifdim\wd5>1.3em%
\hskip-.5em\hskip.5\wd5\fi\fi\FIRSTfalse%
\hskip-.25em\lower1.8\wd3\hbox to0pt{\hss\hskip.45em$%
##1$\hss}\copy3\hskip-.25em\setbox6\hbox{$##1$}%
\MAXright\wd6}%
\def\rroot##1{\hskip-.25em\copy3\hbox to0pt{\hskip.3em$##1$\hss}%
\hskip-.25em\setbox6\hbox{\hskip.6em$##1##1$}%
\MAXright\wd6}%
\def\norroot##1{\hskip-.36em\copy4\hbox to0pt{\hskip.3em$##1$\hss}%
\hskip-.48em\setbox6\hbox{\hskip.6em$##1##1$}%
\MAXright\wd6}%
\def\noroot##1{\ifFIRST\setbox5\hbox{$##1$}\ifdim\wd5>1.3em%
\hskip-.5em\hskip.5\wd5\fi\fi\FIRSTfalse%
\hskip-.36em\raise1.5\wd3\hbox to0pt{\hss\hskip.6em$%
##1$\hss}\copy4\hskip-.38em\setbox6\hbox{$##1$}%
\MAXright\wd6}%
\def\nodroot##1{\ifFIRST\setbox5\hbox{$##1$}\ifdim\wd5>1.3em%
\hskip-.5em\hskip.5\wd5\fi\fi\FIRSTfalse%
\hskip-.36em\lower1.8\wd3\hbox to0pt{\hss\hskip.6em$%
##1$\hss}\copy4\hskip-.38em\setbox6\hbox{$##1$}%
\MAXright\wd6}%
\def\link{\raise.25em\copy0}%
\def\minilink{\raise.25em\copy8}%
\def\llink##1{\raise.35em\copy0\hskip-\wd0%
\raise.12em\copy0\hskip-.5\wd0\hbox to0pt{\hss$##1$\hss}\hskip.5\wd0}%
\def\lllink##1{\raise.40em\copy0\hskip-\wd0\raise.25em\copy0\hskip-\wd0%
\raise.10em\copy0\hskip-.5\wd0\hbox to0pt{\hss$##1$\hss}\hskip.5\wd0}%
\def\rootupright##1{\hbox to0pt{\raise.45em\copy1\hskip-.25em\raise1.3\ht1%
\hbox{\copy3\hskip.3em$##1$}\hss}%
\setbox6\hbox{\hskip.6em\copy1\copy1$##1##1$}%
\ifdim\MAXright<\wd6\MAXright\wd6\fi}%
\def\norootupright##1{\hbox to0pt{\raise.45em\copy1\hskip-.25em\raise1.3\ht1%
\hbox{\kern-.1em\copy4\hskip.3em$##1$}\hss}%
\setbox6\hbox{\hskip.6em\copy1\copy1$##1##1$}%
\ifdim\MAXright<\wd6\MAXright\wd6\fi}%
\def\rootdownright##1{\hbox to0pt{\raise-.5em\copy2\hskip-.25em\raise-1.35\ht1%
\hbox{\copy3\hskip.3em$##1$}\hss}\setbox6%
\hbox{\hskip.6em\copy2\copy2$##1##1$}%
\ifdim\MAXright<\wd6\MAXright\wd6\fi}%
\def\norootdownright##1{\hbox to0pt{\raise-.5em\copy2\hskip-.25em\raise-1.35\ht1%
\hbox{\kern-.1em\copy4\hskip.3em$##1$}\hss}\setbox6%
\hbox{\hskip.6em\copy2\copy2$##1##1$}%
\ifdim\MAXright<\wd6\MAXright\wd6\fi}%
\def\rootdown##1{\hbox to0pt{\hskip-.05em\vrule height.25em depth.65em%
\hskip-.25em\raise-.95em\hbox{\copy3\hskip.3em$##1$}\hss}%
\setbox6\hbox{$##1$}%
\ifdim\MAXright<\wd6\MAXright\wd6\fi}%
\def\norootdown##1{\hbox to0pt{\hskip-.05em\vrule height.25em depth.65em%
\hskip-.25em\raise-.95em\hbox{\copy4\hskip.3em$##1$}\hss}%
\setbox6\hbox{$##1$}%
\ifdim\MAXright<\wd6\MAXright\wd6\fi}%
\def\dots{\raise.25em\copy8\hskip.25em\raisebox{-0.025em}{$\cdots$}\hskip.10em%
\raise.25em\copy8}}%
\def\enddynkin{\ifdim\MAXright>1em\hskip.5\MAXright\else\hskip.5em\fi\egroup}%
 
This macro is used for creation of Dynkin diagrams in AMSTeX.
The usage:
In math-mode, between \dynkin and \enddynkin,
\root#1 makes a node with label #1 over this node,
\rroot#1 and \droot do the same with labels on right and down,
\noroot and \norroot do the same with cross instead of bullet,
\link links two adjacent \root's, \llink#1 links two adjacent nodes
with double-line with #1 (probably '>' or '<') in the middle, \lllink is does
same with three lines, \dots does \cdots between the two adjacent nodes,
\rootdown#1 makes a linked node down labeled by #1, \rootupright#1 and
\rootdownright#1 are labeled linked nodes in the indicated directions 
which do not count for the horizontal dimensions (so the user has to adjust
ad hoc the break after the diagram).

Examples:
$\dynkin \root{a_1}\link\root{a_2}\dots\root{a_{n-1}}\link\root{a_n}
\enddynkin$
or
$\dynkin \root{}\lllink>\root{}\enddynkin$
or
$\dynkin \root{a}\link\root{b}\rootupright{c}\rootdownright{d}\enddynkin$
or
$\dynkin \root{}\link\root{}\rootupright{}\link\root{}\enddynkin$.

\let\dyn\dynkin\let\edyn\enddynkin
\newcommand{\wgt}[2]{\hbox to2mm{\hss\scriptsize${(#1|#2)}$\hss}}
\newcommand{\wg}[1]{\hbox to2mm{\hss\scriptsize${(#1|)}$\hss}}

\frenchspacing
\swapnumbers


\makeatletter
\def\@secnumfont{\bfseries}
\makeatletter

\theoremstyle{thm}
\newtheorem{thm}[subsection]{Theorem}
\newtheorem{prop}[subsection]{Proposition}
\newtheorem*{prop*}{Proposition}
\newtheorem*{thm*}{Theorem}

\newtheorem*{lem*}{Lemma}

\newtheorem*{kor*}{Corollary}

\newtheorem*{definition*}{Definition}

\theoremstyle{definition} 

\def\frak{\mathfrak}
\def\Bbb{\mathbb}

\newcommand{\x}{\times}
\renewcommand{\o}{\circ}

\let\ccdot\cdot
\def\cdot{\hbox to 2.5pt{\hss$\ccdot$\hss}}

\newcommand{\al}{\alpha}

\newcommand{\la}{\lambda}
\newcommand{\om}{\omega}
\renewcommand{\phi}{\varphi}
\newcommand{\ph}{\varphi}
\newcommand{\si}{\sigma}

\newcommand{\ze}{\zeta}

\newcommand{\Ga}{\Gamma}
\newcommand{\La}{\Lambda}
\newcommand{\Ph}{\Phi}

\newcommand{\Om}{\Omega}

\begin{document}
\title[Semiholonomic jets and induced modules]{Semiholonomic jets and
induced modules in Cartan geometry calculus} 
\author{Jan Slov\'ak and Vladim\'{\i}r Sou\v cek}

\address{J.S.: Department of Mathematics and Statistics, Faculty of Science
Masaryk University, Kotl\'a\v rsk\'a 2a, 611~37~Brno, Czech Republic
\newline\indent V.S.: Mathematical Institute of Charles University,
Sokolovská 83, 186 75 Praha, Czech Republic}

\email{slovak@math.muni.cz, soucek@karlin.mff.cuni.cz}

\begin{abstract} 
The famous Erlangen Programme was coined by Felix Klein in 1872 as an
algebraic approach allowing to incorporate fixed symmetry groups as the core
ingredient for geometric analysis, seeing the chosen symmetries as intrinsic
invariance of all objects and tools.  This idea was broadened essentially by
Elie Cartan in the beginning of the last century, and we may consider
(curved) geometries as modelled over certain (flat) Klein's models.

The aim of this short survey is to explain carefully the basic concepts and
algebraic tools built over several recent decades.  We focus on the direct
link between the jets of sections of homogeneous bundles and the associated
induced modules, allowing us to understand the overall structure of
invariant linear differential operators in purely algebraic terms.  This
allows us to extend essential parts of the concepts and procedures to the
curved cases.\end{abstract}

\thanks{Both authors
acknowledge the support by the grants GX19-28628X and 
GA24-10887S of GA\v CR.
The article is also based on lectures by the first author at the 
Training school on Cartan geometry in Brno, September 2023, an event of 
the COST Action CaLISTA CA21109
supported by COST (European Cooperation in Science and Technology), further
support of the project CaLIGOLA, MSCA Horizon, project id 101086123, is
acknowledged by the first author, too.}

\subjclass[2010]{17B10, 17B25, 22E47, 58J60}

\maketitle

These notes go back to much earlier collaborative works of the authors, in
particular with A. Cap and M.G. Eastwood, cf. \cite{ES, CSS4}. The reader may
also see it as an extension of the recent notes \cite{SlSu} focusing on the
tractor calculi and BGG machinery from a quite different perspective. 

Linearized physical theories can be often viewed as complexes of
linear differential operators (and the laws of Physics are then modeled as
the equality of kernels and ranges of such operators), cf. \cite{CD,
E-notices} and the references therein. The expected
symmetries of the theory enforce the operators to commute with them,
thus, such operators have to be \emph{invariant}.

Our aim is to explain concepts allowing to discuss invariant linear
differential operators effectively.  In the homogeneous case, this will
become a very algebraic story, which we then (partially) extend to the 
curved geometries. The main ideas for that can be traced back to
\cite{ER,Ba1,ES}. In large extent, we adopt the language and notation from
\cite{ES}.
We believe that the reader will enjoy the power of the Cartan connections on
our journey.

If necessary, more background on functorial geometric constructions, Klein
geometries, and Cartan geometries can be found in \cite{KMS, parabook, Sha},
while the representation theory can be checked with \cite{vogan}.
We shall work in the category of smooth finite dimensional manifolds here.

\section{The algebraic story of the Klein geometries}

\subsection{Klein geometries and homogenous
bundles}\label{homogeneous-bundles}
A Klein geometry is a manifold $M$ with a transitive smooth action of a Lie
group $G$. Choosing a point $O\in M$, there is the isotropy subgroup $H$ of
this point and the identification $M=G/H$. Up to a choice of the origin $O$,
all Klein geometries are such homogeneous spaces $G/H$. At the infinitesimal
level, the quotient of the Lie algebras $\mathfrak g/\mathfrak h$ clearly
is naturally identified with the tangent space $T_OM$ at the origin. 

Notice that $G \to G/H$ is a principal $H$-bundle, and $G$ comes equipped
with the Maurer-Cartan form $\om\in\Om^1(G,\mathfrak g)$, the prototype of
Cartan connections.
 
Next, consider any linear representation $\mathbb{E}$ of $H$ and 
the associated bundle $\mathcal{E} = G \times_H \mathbb{E}$, i.e., the
classes of the equivalence relations on $G\times \mathbb E$ 
given by $ (u,v) \sim (u \cdot h, h^{-1} \cdot v)$ for all $h\in H$. The
tangent and cotangent bundles $TM$ and $T^*M$ are nice examples with the
$H$-modules $\mathfrak g/\mathfrak h$ and $(\mathfrak g/\mathfrak h)^*$, and
notice how the Maurer-Cartan form provides the identifications.

In the special case when $\mathbb E$ happens to be a $G$-module (and we consider
the restriction of the action to $H\subset G$), 
we may identify the class represented by 
$(u,v)$ with the couple $(u \cdot H, u \cdot v)$. 
Indeed, taking another representative leads to 
$$
((u\cdot h)\cdot H, (u\cdot h)\cdot (h^{-1}\cdot
v))=(u\cdot H,u\cdot v).
$$
Thus, we have verified that $\mathcal{E}$ is the trivial bundle 
$\mathcal{E} = M \times \mathbb{E}$  over $M=G/H$ for all $G$-modules
$\mathbb E$. 

On the other hand, \emph{homogeneous (vector) bundles} over a Klein geometry $M$ 
are the bundles with well defined $G$-actions by (vector) bundle morphisms.
Clearly, for each such bundle, the restriction of the action to the isotropy
group and the fiber over $O$ provides the $H$-module $\mathbb E$ and the
original homogeneous bundle is then identified with $\mathcal E = G\x_H
\mathbb E$. Moreover,
$H$-module morphisms lead to vector bundle morphisms between the homogenous
bundles in the obvious way.  

In other words, for each Klein geometry $M=G/H$, 
we have constructed a functor from the category of
$H$-modules to the category of homogenous bundles over $M=G/H$ with the obvious
action on morphisms.

Extending $G\to G/H$ to the principle $G$-bundle $\Tilde{G} = G \times_H
G\to G/H$, the Maurer-Cartan form $\omega$ uniquely extends to a principal
connection form $\Tilde \omega$ on $\Tilde G$.  

Finally, for $G$-modules $\mathbb T$ we can further identify $\mathcal{T}$
as the associated space $\mathcal{T} = \Tilde{G} \times_G \mathbb{T}\simeq M\x \mathbb T$
and we see that there is the induced linear connection $\nabla$ on all such
bundles $\mathcal{T}$.  These very special homogenous bundles are called
\emph{tractor bundles}.\footnote{These special bundles were traced back to
Tracy Thomas, who introduced them when searching for generalizations of tensor
bundles suitable for conformal Riemannian geometry, see \cite{BEG}. In
private communication with M.G. Eastwood, the authors of this note heard that the name
\emph{tractor} illustrates the fact that traction comes after tension, and
also the similarity with the name of the first inventor.}

\subsection{Sections of homogenous bundles and jet bundles}\label{1.2} 
A global version
of writing sections $\si\in\Ga(\mathcal E)$ of homogeneous bundles in
coordinates views the sections as functions $\tilde\si:G\to \mathbb E$ (by
abuse of notation, later we shall use the same letter $\si$ for both), which have to be $H$-equivariant, i.e., $\tilde\si(u\cdot
h)=h^{-1}\cdot \tilde\si(u)$.  Indeed, such a function defines the section
$\si$ with its values $\si(u\cdot H)$ represented by $(u,\tilde\si(u))$. 
Obviously, this is a well defined bijection between $\Ga(\mathcal E)$ and
$C^\infty(G,\mathbb E)^H$.

The (left) $G$-action $\ell_g$ on the homogeneous bundles induces, of
course, the action on the sections: $(g\cdot \si)(u\cdot H) =
\ell_g\circ\si\circ \ell_{g^{-1}}(u\cdot H)$, which means that in the other
picture, the action is $g\cdot \si = \si\circ\ell_{g^{-1}}$, which again
produces $H$-equivariant functions on $G$.

Next, let us have a look at the jet-prolongations $J^k\mathcal E$ of our
homogenous bundles $\mathcal E= G\x_H\mathbb E$. The $G$-action on sections
projects to the $G$-action on $J^k\mathcal E$, so that they are again
homogenous bundles and let us call the standard fiber $J^k\mathbb
E=(J^k\mathcal E)_O$ over the origin $O$ the \emph{$k$-jet prolongation} of the 
$H$-module $\mathbb E$. 

There is the straightforward observation:

\begin{prop}\label{prop.1.3}
The invariant linear differential operators $D:\Ga(\mathcal E)\to
\Ga(\mathcal F)$, of order at most $k$, are in
bijective correspondence with the $H$-module homomorphisms $J^k\mathbb
E\to \mathbb F$.
\end{prop}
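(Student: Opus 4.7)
The plan is to decompose the correspondence into two well-known steps: first the ``jet factorization'' of linear differential operators, and then the functoriality relating $G$-equivariant vector bundle morphisms between homogeneous bundles to $H$-module homomorphisms between their standard fibers, as developed in Section~\ref{homogeneous-bundles}.

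First, I would recall the standard fact that every linear differential operator $D:\Ga(\mathcal E)\to\Ga(\mathcal F)$ of order at most $k$ corresponds uniquely to a vector bundle morphism $\tilde D:J^k\mathcal E\to \mathcal F$ covering the identity on $M$, via the defining relation $D(\si)(x)=\tilde D(j^k_x\si)$. This is a purely local/linear-algebraic statement, independent of any group action, and gives a bijection between order-$k$ linear differential operators and bundle morphisms $J^k\mathcal E\to\mathcal F$.

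Next, I would translate $G$-invariance of $D$ into $G$-equivariance of $\tilde D$. Using the induced $G$-action on jets, $g\cdot(j^k_x\si)=j^k_{g\cdot x}(g\cdot\si)$, the invariance identity $D(g\cdot\si)=g\cdot D(\si)$ unpacks directly to $\tilde D(g\cdot j^k_x\si)=g\cdot \tilde D(j^k_x\si)$, i.e.\ $\tilde D$ is a morphism of homogenous bundles. Conversely, any $G$-equivariant bundle morphism $J^k\mathcal E\to\mathcal F$ defines an invariant $D$ by the same formula. So the problem reduces to identifying $G$-equivariant vector bundle morphisms $J^k\mathcal E\to\mathcal F$ between two homogenous bundles.

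Finally, I would invoke the functor from $H$-modules to homogenous bundles constructed in Section~\ref{homogeneous-bundles}. Because $G$ acts transitively on $M=G/H$, a $G$-equivariant morphism is determined by its restriction to the fiber over the origin $O$, and compatibility with the isotropy action on that fiber amounts precisely to $H$-equivariance. Thus $G$-equivariant morphisms $J^k\mathcal E\to\mathcal F$ correspond bijectively to $H$-module homomorphisms $(J^k\mathcal E)_O\to \mathcal F_O$, which by definition of $J^k\mathbb E$ and $\mathbb F$ is the desired bijection. The main (mild) point to verify carefully is that $J^k$ genuinely lifts to a functor on homogenous bundles, i.e.\ that the induced $G$-action on $J^k\mathcal E$ is smooth and by vector bundle morphisms, and that the standard fiber $J^k\mathbb E=(J^k\mathcal E)_O$ is well-defined as an $H$-module; this follows from functoriality of $J^k$ applied to the diffeomorphisms $\ell_g$ and compatibility with the composition law.
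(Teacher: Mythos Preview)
Your proposal is correct and follows essentially the same approach as the paper's proof: both use the jet factorization of linear differential operators as bundle morphisms $J^k\mathcal E\to\mathcal F$, and then the equivalence between $G$-equivariant bundle morphisms of homogeneous bundles and $H$-module homomorphisms of their fibers over the origin. The paper's version is simply more terse, passing in the forward direction directly to the value at the origin rather than through the global bundle morphism, but the content is identical.
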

\begin{proof}
Clearly, evaluating the values of an invariant differential operator $D$ at
the origin $O$, $D(\si)(O)$ depends on the $k$-jet $j^k_O\si$ only. By
restricting the invariance to $H\subset G$, we obtain the requested module
homomorphism by the linearity of $D$.

Vice versa, linear differential operators of order at most $k$ coincide with
morphisms between the homogeneous vector bundles $J^k\mathcal E$ and $\mathcal
F$, and those are in bijection with the module homomorphisms.   
\end{proof}

Although the latter observation looks promising, $J^k\mathbb E$ is a
horrible representation of $H$, even if $\mathbb E$ was nice, e.g.,
irreducible. Thus, in general, we can
hardly find and discuss the operators easily this way. Exceptionally, the case
$k=1$ might be discussed directly for large classes of Klein geometries, cf.
\cite{SlSo}.

\subsection{Induced modules}
A better way to understand invariant linear differential operators 
was suggested very long ago, see e.g., \cite{Ki,Ko}, and the references
therein. The point is that understanding embeddings of
nice modules into complicated ones might be much easier than looking for
morphisms in the original direction. Thus we look at the dual picture.

The elements $X$ of the Lie algebra $\mathfrak g$ are identified with the
left invariant vector fields $\om^{-1}(X)\in\mathcal X(G)$.  Differentiating
the $H$-equivariant functions $\si:G\to \mathbb E$ in the direction of
$X\in\mathfrak g$ in the unit $e\in G$ corresponds to derivatives of the
sections.  More precisely, if $X\in\mathfrak h$, then $(X\cdot\si)(u) =
-X\cdot (\si(u))$ by the equivariance and, thus, the genuine differential
parts are in the quotient $\mathfrak g/\mathfrak h$, thus corresponding to
derivatives of the sections in directions in $T_OM$.

Now, consider a ``word'' $X_1X_2\dots X_k$ of elements in $\mathfrak g$ and
the corresponding differential operator $\si\mapsto
\om^{-1}(X_1)\o\om^{-1}(X_2)\o\dots\o\om^{-1}(X_k)\cdot\si(e)$ on the functions. 

We may consider this operation as defined on the tensor algebra $T(\mathfrak
g)$ and obviously the entire ideal in $T(\mathfrak g)$ generated by the
expressions $X\otimes Y-Y\otimes X - [X,Y]$, with $X, Y\in \mathfrak g$ and
$[X,Y]$ their Lie bracket, must act trivially.

The resulting quotient (left and right) $\mathfrak g$-module $\mathfrak U(\mathfrak g)
= T(\mathfrak g)/\langle X\otimes Y - Y\otimes X - [X,Y]\rangle$ is called
the \emph{universal enveloping algebra} of the Lie algebra $\mathfrak g$. 

We would like to understand the linear forms on the jet modules
$J^k\mathbb E$.  So far we differentiate functions also in the vertical
directions, and our values are in $\mathbb E$.  Thus we should consider the
tensor product 
$$
V(\mathbb E)=\mathfrak U(\mathfrak g)\otimes_{\mathfrak
U(\mathfrak h)} \mathbb E^*
.$$ 
The space $V(\mathbb E)$ clearly enjoys the
structure of a $(\mathfrak g,H)$-module (and $(\mathfrak U(\mathfrak
g),H)$-module), and it is called the \emph{induced module} for the
$H$-module $\mathbb E$.

\begin{prop}\label{prop.1.5}
The induced module $V(\mathbb E)$ is the space of all linear forms on
$J^\infty\mathbb E$ which factor through some $J^k\mathbb E$, i.e., depend
on finite number of derivatives. 
\end{prop}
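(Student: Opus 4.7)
The plan is to construct a natural linear map $\Phi\colon V(\mathbb E)\to (J^\infty\mathbb E)^*$ realising the formula already suggested in the preceding paragraphs: for a monomial $X_1\cdots X_k\in \mathfrak U(\mathfrak g)$ and a functional $\la\in \mathbb E^*$, I would set
$$
\Phi(X_1\cdots X_k\otimes \la)(\si)=\la\bigl(\om^{-1}(X_1)\circ\dots\circ \om^{-1}(X_k)\cdot \tilde\si(e)\bigr)
$$
and extend by linearity. The task then decomposes into three checks: that $\Phi$ is well-defined, that its image lies in $\bigcup_k(J^k\mathbb E)^*\subset (J^\infty\mathbb E)^*$, and that $\Phi$ is a bijection onto that subspace.

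For well-definedness, two things must be verified. First, the formula descends from $T(\mathfrak g)\otimes\mathbb E^*$ to $\mathfrak U(\mathfrak g)\otimes\mathbb E^*$: this is immediate since $\om^{-1}$ sends the Lie bracket on $\mathfrak g$ to the commutator of left-invariant vector fields on $G$, so the defining relations $X\otimes Y-Y\otimes X-[X,Y]$ already act trivially on $\tilde\si$. Second, to pass across $\otimes_{\mathfrak U(\mathfrak h)}$, I would invoke the $H$-equivariance $\tilde\si(u\cdot \exp(tH))=\exp(-tH)\cdot\tilde\si(u)$; differentiating at $t=0$ gives $\om^{-1}(H)\cdot \tilde\si=-H\cdot \tilde\si$ pointwise, and iterating (using that the fibrewise action of $\mathfrak h$ on $\mathbb E$ commutes with the differentiation $\om^{-1}$) converts the action of a word in $\mathfrak U(\mathfrak h)$ through $\om^{-1}$ into the principal anti-automorphism acting on the values; transposing this onto $\la$ yields exactly the relation $Dh\otimes \la=D\otimes (h\cdot\la)$ defining $V(\mathbb E)$.

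That the image lies in the finite-jet duals is routine: each $\om^{-1}(X_i)$ is a first-order differential operator, so a word of length $k$ produces a differential operator of order at most $k$, and its evaluation at $O$ factors through $j^k_O\si$.

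The main obstacle is to show that $\Phi$ is a bijection onto $\bigcup_k(J^k\mathbb E)^*$. Here I would use the relative Poincar\'e--Birkhoff--Witt theorem: choosing any vector-space complement $\mathfrak m$ of $\mathfrak h$ in $\mathfrak g$, PBW gives a right $\mathfrak U(\mathfrak h)$-linear isomorphism $\mathfrak U(\mathfrak g)\cong S(\mathfrak m)\otimes\mathfrak U(\mathfrak h)$, so after tensoring one obtains a filtered vector-space isomorphism $V(\mathbb E)\cong S(\mathfrak g/\mathfrak h)\otimes \mathbb E^*$. On the other side, the exponential parametrisation $X\mapsto \exp(X)\cdot H$ on $\mathfrak m$ gives local coordinates at $O$ identifying $J^\infty\mathbb E$ with the formal Taylor series $\widehat{S((\mathfrak g/\mathfrak h)^*)}\otimes \mathbb E$ and $J^k\mathbb E$ with its truncation in symbol degree $\le k$. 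Under these identifications the pairing induced by $\Phi$ reduces on each associated graded piece to the canonical non-degenerate pairing between $S^i(\mathfrak g/\mathfrak h)\otimes \mathbb E^*$ and $S^i((\mathfrak g/\mathfrak h)^*)\otimes \mathbb E$; an induction on the filtration degree then shows that $\Phi$ is an isomorphism from $V(\mathbb E)$ onto $\bigcup_k(J^k\mathbb E)^*$, which is the statement of the proposition.
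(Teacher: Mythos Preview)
Your argument is correct and follows essentially the same route as the paper: the paper's proof is a terse sketch that amounts to ``choose a complement $\mathfrak m$ to $\mathfrak h$, use the commutation relations to bubble all $\mathfrak h$-letters to the right where they act algebraically, and observe that what remains is exactly $S(\mathfrak g/\mathfrak h)\otimes\mathbb E^*$,'' which is precisely the relative PBW identification you spell out in detail. Your version is considerably more explicit---you write down the pairing map $\Phi$, verify both halves of well-definedness separately, and match the filtrations on the two sides via exponential coordinates---whereas the paper simply gestures at the construction and defers to \cite{vogan} for details; but the underlying mechanism is the same.
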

\begin{proof}
The claim follows from the construction of $V(\mathbb E)$ and the fact that
choosing a complementary vector subspace to $\mathfrak h$ in $\mathfrak g$,
we can decompose all letters in our words $X_1\dots X_k$ above and, by the
equalities enforced by living in the quotient by the ideal, we may ``bubble'' 
the letters in $\mathfrak h$ to the very right. Once there, they act 
algebraically and, thus,
tensorizing over $\mathfrak U(\mathfrak h)$ we remove just all redundancies.

The reader might consult \cite{vogan} for more details.
\end{proof}

Obviously, $\mathbb E^*$ injects into $V(\mathbb E)$ and generates this
$\mathfrak g$-module. Now, we may enjoy a small but extremely important miracle:

\begin{thm}[Frobenius reciprocity]\label{thm.1.6}
For all finite dimensional representations $\mathbb E$ and $\mathbb F$ of $H$, 
there are the canonical isomorphisms 
$$
\operatorname{Hom}_H(\mathbb F^*,V(\mathbb E)) =
\operatorname{Hom}_{(\mathfrak U(\mathfrak g),H)}(V(\mathbb F),V(\mathbb E)).
$$
\end{thm}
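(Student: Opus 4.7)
The plan is to construct the two maps in the claimed bijection directly, using that $V(\mathbb F)$ is generated as a $\mathfrak U(\mathfrak g)$-module by the canonical copy $\iota_{\mathbb F}(\mathbb F^*)\subset V(\mathbb F)$, $f\mapsto 1\otimes f$, noted just before the statement.

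First I would write down the restriction map. Given an $(\mathfrak U(\mathfrak g),H)$-morphism $\Phi\colon V(\mathbb F)\to V(\mathbb E)$, set $r(\Phi)=\Phi\circ\iota_{\mathbb F}$. Since $\iota_{\mathbb F}$ is $H$-equivariant (both sides carry the obvious $H$-action and $1\in\mathfrak U(\mathfrak g)$ is $H$-fixed) and $\Phi$ is in particular $H$-equivariant, $r(\Phi)\in\operatorname{Hom}_H(\mathbb F^*,V(\mathbb E))$.

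Next I would construct the inverse extension map. Given $\ph\in\operatorname{Hom}_H(\mathbb F^*,V(\mathbb E))$, define first a linear map $\Tilde\ph\colon\mathfrak U(\mathfrak g)\otimes\mathbb F^*\to V(\mathbb E)$ by $u\otimes f\mapsto u\cdot\ph(f)$, using the left $\mathfrak U(\mathfrak g)$-action on $V(\mathbb E)$. The key point is that this descends to the tensor product over $\mathfrak U(\mathfrak h)$, i.e.\ for $X\in\mathfrak U(\mathfrak h)$ one has $(uX)\cdot\ph(f)=u\cdot\ph(X\cdot f)$. This follows because $H$-equivariance of $\ph$ implies, by differentiating at $e\in H$, $\mathfrak h$-equivariance, and then by the universal property of $\mathfrak U(\mathfrak h)$ it extends to $\mathfrak U(\mathfrak h)$-equivariance. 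The resulting map $e(\ph)\colon V(\mathbb F)\to V(\mathbb E)$ is $\mathfrak U(\mathfrak g)$-linear by construction and $H$-equivariant because both the $\mathfrak U(\mathfrak g)$-action on $V(\mathbb E)$ and $\ph$ intertwine with the $H$-action.

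Finally I would check that $r$ and $e$ are mutually inverse. The composition $r\circ e$ applied to $\ph$ gives back $\ph$ because $e(\ph)(1\otimes f)=1\cdot\ph(f)=\ph(f)$. Conversely, $e\circ r$ applied to $\Phi$ gives a $\mathfrak U(\mathfrak g)$-linear map that agrees with $\Phi$ on $\iota_{\mathbb F}(\mathbb F^*)$; since this set generates $V(\mathbb F)$ as a $\mathfrak U(\mathfrak g)$-module, the two maps coincide.

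The only non-routine step is the descent to $\otimes_{\mathfrak U(\mathfrak h)}$, i.e.\ promoting $H$-equivariance of $\ph$ to $\mathfrak U(\mathfrak h)$-equivariance. For connected $H$ this is standard; in general, one combines the $H$-action with its derived $\mathfrak h$-action on $\mathbb F^*$ and $V(\mathbb E)$, so that $\ph$ is $(\mathfrak h,H)$-equivariant and the universal property applies. Everything else is a formal consequence of the generation of $V(\mathbb F)$ by $\mathbb F^*$.
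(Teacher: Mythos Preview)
Your proposal is correct and follows essentially the same approach as the paper: restriction along $\iota_{\mathbb F}$ in one direction, and the extension $u\otimes f\mapsto u\cdot\ph(f)$ in the other, with the well-definedness over $\mathfrak U(\mathfrak h)$ reduced to $\mathfrak h$-equivariance of $\ph$. Your write-up is in fact more careful than the paper's, which uses the same two maps but leaves the passage from $H$-equivariance to $\mathfrak h$-equivariance implicit and does not spell out the mutual-inverse check.
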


\begin{proof}
If we are given a homomorphism $\Ph\in 
\operatorname{Hom}_{(\mathfrak U(\mathfrak g),H)}(V(\mathbb F),V(\mathbb E))$,
we simply define $\ph:\mathbb F^*\to V(\mathbb E)$ by restriction.

On the other hand, having a $\ph\in 
\operatorname{Hom}_H(\mathbb F^*,V(\mathbb E))$, we first define for all
$x\in\mathfrak U(\mathfrak g)$ and $v\in \mathbb F^*$,
$$
\Phi(x\otimes v) = x \otimes_{\mathfrak U(\mathfrak h)} \ph(v)
,$$
which extends linearly, if well defined. To check this, notice that for all
$X\in\mathfrak h$ and $v\in\mathbb F^*$,
$$
\Ph(X\otimes v - 1\otimes X\cdot v) = X\otimes \ph(v) - 1\otimes \ph(X\cdot
v) = X\otimes \ph(v) - 1\otimes X\cdot\ph(v)
,$$ which completes the proof.
\end{proof}

\section{The translation principle}

\subsection{Parabolic Klein models} In the rest of the paper, we shall restrict
to a large class of geometries modelled over the Klein geometries $G/P$ with
$G$ semisimple and $P\subset G$ parabolic.  
Let us recall that $P$ is called
parabolic if it contains a Borel subgroup in $G$.

We expect the reader
knows the elements of the structure theory of the semisimple Lie groups, the
root spaces, the Weyl group, etc. Consult \cite{vogan} or \cite[Chapter 2]{parabook} if necessary.

At the level of Lie algebras, the parabolic subalgebras are 
those which contain a Borel subalgebra.   
The choices of all parabolic subalgebras $\mathfrak p\subset \mathfrak g$
correspond to graded decompositions of the semisimple Lie algebras 
\begin{equation}\label{adjoint_composition}
\mathfrak{g} = \mathfrak{g}_{-k} \oplus \dots \oplus \mathfrak
g_{-1}\oplus\mathfrak g_{0}\oplus\mathfrak g_1\oplus\dots\oplus\mathfrak{g}_k = \mathfrak
g_-\oplus \mathfrak p.
\end{equation}
Thus, the Lie brackets satisfy $[\mathfrak g_i,\mathfrak
g_j]\subset \mathfrak g_{i+j}$, and $\mathfrak p=\mathfrak g_0 \oplus
\mathfrak p_+=\mathfrak g_0 \oplus \mathfrak g_1\oplus\dots\oplus \mathfrak g_k$
is the decomposition of the parabolic subalgebra $\mathfrak p$ 
into the reductive Levi quotient 
$\mathfrak l=\mathfrak g_0$ and
the nilradical $\mathfrak p_+$. There is also the subalgebra $\mathfrak
g_{-}=\mathfrak g_{-k}\oplus\dots\dots\oplus \mathfrak g_{-1}$ 
complementary to $\mathfrak p$, which is the dual to $\mathfrak p_+$
with respect to the Cartan-Killing form on $\mathfrak g$. 

For a given semisimple $\mathfrak g$, the above gradings with isomorphic
parabolic subalgebras $\mathfrak p\subset \mathfrak g$ are given uniquely, 
up to conjugation in $\mathfrak g$. They are also uniquely determined by the 
\emph{grading elements} $E$, i.e., $E$ with the property
$[E,X]=jX$ for all $X\in\mathfrak g_j$. Obviously, $E$ is in the center of
$\mathfrak g_0$, which is identified with 
$\mathfrak l=\mathfrak p/\mathfrak p_+$. 

The closed Lie subgroups $P\subset G$ are parabolic if and only if their
algebras $\mathfrak p=\operatorname{Lie}P$ are parabolic.

If $G$ is a complex semisimple Lie group, then there is a nice geometric
description: $P\subset G$ is parabolic if and
only if $G/P$ is a compact manifold (and then it is a compact K\"ahler
projective variety),
see e.g., \cite[Section 1.2]{Zierau}. In
the real setting, the so called generalized flag varieties $G/P$ with
parabolic $P$ are always compact, too.

We talk about $|k|$-graded Klein models $G/P$. For the complex semisimple
algebras, the Borel subalgebras are generated by Cartan subalgebras in
$\mathfrak g$ and all simple positive co-roots $\al_i$ in $\mathfrak g$. The
parabolic subalgebras $\mathfrak p$ then correspond to the subsets of the
co-roots $\al_i$, for which $-\al_i$ do not belong to $\mathfrak p$. In the language
of the Dynkin diagrams, this can be nicely encoded by crossing the nodes
related to negative simple co-roots in $\mathfrak g_-$.

In the real situation, $\mathfrak p\subset\mathfrak g$ is parabolic, if the
same holds true for the complexification. The classification is more subtle
here, but it can be nicely encoded by the
Satake diagrams with crossed nodes being allowed only for the white ones,
and if one of the nodes joined by an arrow is crossed, then the other one
has to be crossed, too (see \cite[Chapter 2]{parabook} for detailed discussion). 

The induced modules $V(\mathbb E)$ are called (generalized) \emph{Verma
modules} and they enjoy a very rich and well understood structure theory, see e.g.,
\cite{Lep, BC1, BC2}. 

We shall present a brief selection of tools and
results from this theory, 
preparing our approach to invariant differential operators on curved
geometries.

A panopticum of examples of geometries modelled on parabolic Klein geometries
can be found in \cite[Chapter 4]{parabook}, including projective, conformal
Riemannian, CR, and many others. Similarly to the survey \cite{SlSu}, we
shall focus on a few $|1|$-graded examples here. 

We shall see that the invariant linear operators appear in isomorphic
patterns and the de Rham complex of differential operators on the algebra of
differential forms (decomposed into irreducible components) is a prototype
of all of them.  We shall try to explain the general approach in a "learning
by doing" way and we go through
one line of examples of the geometries and patterns only.

\subsection{Grassmannian examples} \label{Grass_ex}
Let us work with the real split forms of
the type $A$ algebras, i.e., $\mathfrak g =\mathfrak{sl}(n,\mathbb R)$. 
There the $|1|$-graded cases correspond to choices of just one crossed node,
say the $p$th one, and $\mathfrak g_{-1}=\mathbb R^q\otimes(\mathbb R^p)^*$,
with $p+q=n$.  The Klein geometries are the so-called $(p,q)$-Grassmannians,
i.e., the spaces of $p$-planes through origin in $\mathbb R^{p+q}$.  The
structure of the graded $\mathfrak g$ is nicely seen in the blockwise scheme
of all trace-free matrices $X\in\mathfrak g$: $$ \mathfrak g \simeq
\begin{pmatrix} 0 & 0 \\ \mathfrak g_{-1} & 0 \end{pmatrix} \oplus \mathfrak
z \oplus \begin{pmatrix} \mathfrak{sl}(p,\mathbb R) & 0 \\ 0 &
\mathfrak{sl}(q,\mathbb R) \end{pmatrix} \oplus \begin{pmatrix} 0 &
\mathfrak g_1 \\ 0 & 0 \end{pmatrix} ,$$ where $\mathfrak z$ is the
one-dimensional center of the Levi factor $\mathfrak g_0$ (generated by the
grading element $E$), while the rest of $\mathfrak g_0$ is its semisimple
part.  Obviously, the grading element is built of constant multiples of
identity matrices in the diagonal blocks, adjusted depending on $p$ and $q$.

The case $p=1$ provides the projective spaces $\mathbb{RP}_{n-1}$. We shall
always assume $1\le p\le q$.
In the small dimensions and in the language of the Dynkin diagrams, the
2-dimensional projective space is drawn as 
$\dyn\noroot{}\link\root{}\edyn$, while our 
Klein geometries of, 
$(2,2)$, $(2,3)$, and $(3,3)$ Grassmannians are encoded as
$$
\dyn\root{}\link\noroot{}\link\root{}\edyn \qquad
\dyn\root{}\link\noroot{}\link\root{}\link\root{}\edyn \qquad
\dyn\root{}\link\root{}\link\noroot{}\link\root{}\link\root{}\edyn
$$
Notice, that $\mathfrak{sl}(4,\mathbb C)=\mathfrak{so}(6,\mathbb C)$ and
this explains how the case of $(2,2)$-Grassmann\-ians corresponds to the
Roger Penrose's complexified model of the Universe (i.e., the four-dimensional
conformal Riemannian geometry of the Minkowski space, but in the
complexified form).

As well known, all representations of a semisimple Lie algebra are
completely reducible and the irreducible ones can be all built from the so
called fundamental weights, which form a base of the dual of the Cartan
subalgebra in $\mathfrak g$, and we can encode them by writing one over the
respective node in the Dynkin diagram and zeros on the rest. 
All irreducible representations are then given
by the linear combinations of the fundamental ones with non-negative
integral coefficients. These so-called \emph{dominant weights} provide us
with the (homogeneous) irreducible tractor bundles, i.e., 
those homogeneous vector bundles defined by irreducible $G$-modules.

\subsection{The completely reducible homogenous
bundles}\label{index_notation}
Let us continue with our Grassmannian example.  All irreducible $\mathfrak
p$-modules are obtained as (outer) tensor products of irreducible
representations of the two semisimple components in $\mathfrak g_0$,
together with the action of the center, and the trivial action of $\mathfrak
g_1$.  The simplest nontrivial modules include those with trivial actions of
the center and the second $\mathfrak g_0$ component, thus, defined by
irreducible actions of $\mathfrak{sl}(p,\mathbb R)$.  The standard
representation $\mathbb R^p$ leads to the homogenous bundle $\mathcal
E^{A}$, while its dual defines the homogenous bundle $\mathcal E_{A}$. 
Similarly, if only the $\mathfrak{sl}(q,\Bbb R)$ component acts
nontrivially, the standard representation will give rise to the homogenous
bundle $\mathcal E^{A'}$, and its dual will be $\mathcal E_{A'}$.  The tangent
bundle $\mathcal E^{A'}_{A}$ and cotangent bundle $\mathcal E^{A}_{A'}$ come
from the tensor products of these bundles.  Notice, this is exactly the
Penrose's abstract index notation with the spinor indices, extended to
general Grassmannians.  Finally, the one-dimensional modules
$\Lambda^p(\mathbb R^p)\simeq \Lambda^q(\mathbb R^{q*})$ are coming from
actions of the center $\mathfrak z$ only.  We call such bundles the
\emph{weight bundles $\mathcal E[w]$}, and we adopt the usual normalization
taking \begin{gather*} \mathcal E^{[AB\dots C]}=\mathcal E[-1]=\mathcal
E_{[A'B'\dots C']} \\ \mathcal E_{[AB\dots C]}=\mathcal E[1]=\mathcal
E^{[A'B'\dots C']} ,\end{gather*} where we put $p$ indices on the left and $q$
indices on the right, and $[\ \ ]$ or $(\ \ )$ on indices mean
antisymmetrization or symmetrization, respectively.  The general weight
bundles with integral weights are the tensor powers $\mathcal E[w]=\mathcal
E[1]^w$ for positive $w$ and $\mathcal E[w]=\mathcal E[-1]^{-w}$ for
negative ones.  With some care, we may extend this to real weights $w$.

It is easy to encode the irreducible $\mathfrak p$ modules by weights of the
entire $\mathfrak g$. Indeed, the highest weights of such $\mathfrak
p$-modules are those integral linear combinations of the fundamental
weights of $\mathfrak g$ (recall the fundamental weights correspond to 
exterior forms $\Lambda^k\mathbb
R^{p+q}$, $k=1,\dots,p+q-1$), whose coefficients must by non-negative, 
except the
one over the crossed node. The coefficients over the uncrossed nodes
encode the representations of the two semisimple components in $\mathfrak
g_0$, while the coefficient over the crossed node completes the
information about the action of the center $\mathfrak z$. We call such weights
\emph{$\mathfrak p$-dominant}.

In our case, all our irreducible homogeneous vector bundles live in the
tensor bundles $\mathcal E^{A\dots BC'\dots D'}_{E\dots FG'\dots H'}\otimes\mathcal
E[w]$ which we usually write as 
$\mathcal E^{A\dots BC'\dots D'}_{E\dots FG'\dots H'}[w]$,
and there is a straightforward algorithm, how to compute the action of the
grading element on the corresponding modules.\footnote{In general, 
the action of the grading element of a
parabolic subalgebra $\mathfrak p$ in semisimple $\mathfrak g$ is computed
as the (sum of) scalar product(s) of the lines in the inverse Cartan matrix
of $\mathfrak g$ corresponding to the crossed nodes, with the coefficients of the highest weight (in the
expression via the fundamental weights), cf. \cite[Section 3.2.12]{parabook}.} 

First, we adopt another encoding for the weights. 
Writing $e_i$ for the diagonal matrix with just one 1 entry at the $i$th
place and zero otherwise, the dual basis to the fundamental weights
corresponds to $e_i-e_{i+1}$ and we may replace the $(n-1)$-tuples
$(\al_1,\dots,\al_{n-1})$ of
integers by $n$-tuples $(a_1,\dots,a_n)$, so that $\al_i=a_i-a_{i+1}$, and we
indicate the $p+q$ blockwise structure by adding a vertical bar at the proper place.
Requesting the dominant weights to be non-negative integral then means 
$a_1\ge a_2\ge \dots \ge a_n$, while for $\mathfrak p$-dominant weights we
request this for the first $p$-tuple and last $q$-tuple separately. Of
course, these longer vectors are unique for the weights, up to a common
constant only. Thus, we usually normalize the choice, e.g., we may request $a_n=0$.

Next, we write down the $\mathfrak p$-dominant weights corresponding 
to enough $\mathfrak p$-modules with known action of $E$ (e.g., $\mathfrak g$
and the fundamental representations the two semisimple components of 
$\mathfrak g_0$), and compute which linear formula provides the right action of $E$
on them. In our three examples displayed above, the grading elements
acts on the modules determined by the coefficients $a_i$ by the constants:
\begin{equation}\label{ell_ntuples}
\tfrac12(a_1+a_2-a_3-a_4)\quad \tfrac35(a_1+a_2)-\tfrac25(a_3+a_4+a_5)\quad
\tfrac12(a_1+a_2+a_3-a_4-a_5-a_6)
\end{equation}
clearly independent of our normalization.

Moreover, there is the so-called lowest weight $\rho$ being the sum of all the
fundamental ones (i.e., with coefficient one over each node). For good
reasons which we shall see below, we shall add $\rho$ to our weights when
encoding them. Thus the trivial representation will be written as 
$((n-1)\ (n-2)\ \dots\ (n-p)\mid (n-p-1)\ \dots\ 0)$. In particular, in
our three examples we get the three vectors
$$
(3\ 2\mid 1\ 0)\qquad (4\ 3\mid 2\ 1\ 0)\qquad (5\ 4\ 3\mid 2\ 1\ 0)
.$$
Using the same formulae for the action of the center will simply add the
constant $\frac12pq$ corresponding to the action on the lowest weight.

The $n$-tuples for $\mathfrak g$-dominant weights in this encoding satisfy
$a_1>a_2>\dots>a_{n-1}>a_n=0$. The $\mathfrak p$-dominant ones request only
$a_1>a_2>\dots>a_p$ and $a_{p+1}>\dots>a_n$.

\subsection{Homomorphisms between Verma modules}\label{2.4}
We continue working with semisimple $\mathfrak g$ and parabolic $\mathfrak p$.
Consider two $\mathfrak p$-modules $\mathbb E$, $\mathbb F$ and the
corresponding homogeneous bundles $\mathcal E$ and $\mathcal F$ (we shall
not care about the choice of the Lie groups now). 

As obvious from Propositions \ref{prop.1.3} and \ref{prop.1.5}, and
Theorem \ref{thm.1.6}, all invariant linear differential operators
$\Ga(\mathcal E)\to \Ga(\mathcal F)$ are uniquely defined by $(\mathfrak
U(\mathfrak g),P)$-homo\-morphisms $\Ph:V(\mathbb F)\to V(\mathbb E)$.

The Verma modules $V(\mathbb E)$ are always equipped by an obvious filtration 
$$
\mathbb R\subset(\mathbb R\oplus\mathbb E^*)=V_1(\mathbb E)\subset
V_2(\mathbb E)\subset \dots\subset V(\mathbb E).
$$  
Since $\mathbb F^*$ is finite dimensional, its image under $\Ph$ is
contained in $V_k(\mathbb E)$ for some integer $k>0$. The least $k$ with this property
is called the \emph{order} of the morphism $\Ph$. Clearly, this corresponds
to the differential order of the corresponding linear differential operator.

We may also translate the concept of the symbol into this dual setup.  
By virtue of the Poincar\'e-Birkhoff-Witt theorem, the grading corresponding
to the filtration is $\operatorname{gr}(V(\mathbb E))=\operatorname{gr}(\mathfrak
U(\mathfrak g_-))\otimes \mathbb E^* = S(\mathfrak g_-)\otimes_{\mathbb R} \mathbb
E^*$, (see e.g. \cite[Section 2.1.10]{parabook}). Thus $V(\mathbb E)=S(\mathfrak g_-)\otimes_{\mathbb R} \mathbb E$ as
a $\mathfrak g_0$-module. In particular, there are the short exact sequences 
\begin{equation}\label{sequence_for_symbol}
0 \to V_{k-1}(\mathbb E) \to V_{k}(\mathbb E)\to S^k\mathfrak g_-\otimes
\mathbb E \to 0
.\end{equation}

Now, the \emph{symbol} of a homomorphism $\Ph:V(\mathbb F)\to V(\mathbb E)$ 
is defined as 
$$
\si(\Ph) : \mathbb F^* \to V_k(\mathbb E)\to V_k(\mathbb E)/V_{k-1}(\mathbb E)= S^k(\mathfrak
g_-)\otimes \mathbb E^*
,$$
where $k$ is the order of $\Ph$.

The center of the universal enveloping algebra
$\mathfrak U(\mathfrak g)$ is quite well understood for all semisimple
algebras $\mathfrak g$. 
For each Verma module $V(\mathbb E)$, the
restriction of the action to this center is called the \emph{infinitesimal
character} of $V(\mathbb E)$. Clearly, the infinitesimal characters of
$V(\mathbb F)$ and $V(\mathbb E)$ must coincide if there should be a
non-zero homomorphism between them.

Now, another miracle comes: By the famous Harish-Chandra theorem,
\emph{two
Verma modules $V(\mathbb E)$ and $V(\mathbb F)$ have got the same infinitesimal
character if and only if their highest weights appear in the same orbit of
the affine action of the Weyl group on the space of weights}. 

Here, the Weyl group is generated by all reflections defined by the simple
roots, and the affine action is this very action applied to the sums of the
weights with the lowest weight $\rho$. 

In our Grassmannian examples, all the elements of the Weyl group
act just by the permutations of the coefficients in the $n$-tuple representing
the weight. Thus, for our special cases (we add the 2-dimensional projective
space), 
we are getting the following patterns of all $\mathfrak
p$-dominant weights in the affine orbit of the trivial representation.
Notice, we organize the columns by decreasing constant of the action of the
grading element, and the order of the homomorphisms between the modules in
the neighboring modules in the neighboring columns is always 1. 

In fact the columns are giving the
decompositions of $\La^j(T^*M)$ into irreducible components and the arrows
correspond to the restrictions and decompositions of the exterior 
differential $d$. Of course, as homomorphisms of the Verma modules, they go
in the opposite directions. 

\begin{equation}\label{1|2}
\xymatrix@R=2mm@C=4mm{
{\ \ \wgt{2}{10}\ \ }
&{\ \ \wgt{1}{20}\ \ }\ar[l]
&{\ \ \wgt{0}{21}\ \ }\ar[l]
}
\end{equation}

\begin{equation}\label{2|2}
\xymatrix@R=2mm@C=4mm{
\wgt{32}{10}
&&\wgt{21}{30} \ar[dl]
&&\wgt{10}{32} \ar[dl]
\\
&\wgt{31}{20} \ar[ul]
&&\wgt{20}{31} \ar[ul]\ar[dl]
\\
&&\wgt{30}{21} \ar[ul]
}
\end{equation}

\begin{equation}\label{2|3}
\xymatrix@R=2mm@C=4mm{
\wgt{43}{210}
&&\wgt{32}{410}\ar[dl]
&&\wgt{21}{430}\ar[dl]
&&\wgt{10}{432}\ar[dl]
\\
&\wgt{42}{310}\ar[ul]
&&\wgt{31}{420}\ar[ul]\ar[dl]
&&\wgt{20}{431}\ar[ul]\ar[dl]
\\
&&\wgt{41}{320}\ar[ul]
&&\wgt{30}{421}\ar[ul]\ar[dl]
\\
&&&\wgt{40}{321}\ar[ul]
}
\end{equation}

\begin{equation}\label{3|3}
\xymatrix@R=2mm@C=8mm{
&&&\wgt{432}{510}\ar[dl]
&\wgt{431}{520}\ar@(ul,dl)[l]\ar[ddl]
&\wgt{421}{530}\ar@(ul,dl)[l]\ar[ddl]
&\wgt{321}{540}\ar@(ul,dl)[l]
\\
&&\wgt{532}{410}\ar[dl]
&&&&&\wgt{320}{541}\ar[ul]\ar[dl]
\\
\wgt{543}{210}
&\wgt{542}{310}\ar@(ul,dl)[l]
&&\wgt{531}{420}\ar[dl]\ar[ul]
&\wgt{521}{430}\ar@(ul,dl)[l]
&\wgt{430}{521}\ar[uul]\ar[ddl]
&\wgt{420}{531}\ar@(ul,dl)[l]\ar[uul]\ar[ddl]
&&\wgt{310}{542}\ar[ul]\ar[dl]
&\wgt{210}{543}\ar@(ul,dl)[l]
\\
&&\wgt{541}{320}\ar[ul]
&&&&&\wgt{410}{532}\ar[ul]\ar[dl]
\\
&&&\wgt{540}{321}\ar[ul]
&\wgt{530}{421}\ar@(ul,dl)[l]\ar[uul]
&\wgt{520}{431}\ar@(ul,dl)[l]\ar[uul]
&\wgt{420}{531}\ar@(ul,dl)[l]
}
\end{equation}

There are algorithms discovering all non-zero homomorphisms in such
patterns, cf. \cite{BC1,BC2}. There are no other non-zero homomorphisms for
the de Rham complex on the 2-dimensional projective space in \eqref{1|2}.
In \eqref{2|2}, the central diamond (the
square of arrows) displays
two non-zero compositions, which equal each other up to sign (as expected,
since the whole pattern must be the de Rham complex of operators). Moreover,
there is the special homomorphism of fourth order joining the most right and
most left modules. This corresponds to the Paneitz operator, whose
symbol is the square of the Laplacian, see \cite{ES}. All other compositions
of morphisms not mentioned above are zero.

In \eqref{2|3} and \eqref{3|3}, the situation is the same with all the
diamonds there.  In \eqref{2|3}, there are additionally two fourth order
homomorphisms joining the modules in the first line.  In \eqref{3|3}, there
are six such fourth order morphisms, but also two quite different ones -- a
morphism of order nine joining the most right and most left modules, and one of
order seven joining their neighbors. We shall not go into details of this
example here, there is the work in progress, \cite{SlSo-prep}, covering this
case in detail.

Notice that the decompositions of the spaces of exterior forms
$\Lambda^k((\mathbb R^p)^*\otimes\mathbb R^q)$ are easily
understood by mimicking every symmetrization in $\otimes^k(\mathbb R^p)^*$
by identical antisymmetrization in $\otimes^k(\mathbb R^q)$, and vice versa.
This is very nicely encoded by means of the so called Young symmetrizers and
Young diagrams. The pattern \eqref{3|3} is drawn in this language in
\cite[Section 3.2.17]{parabook}.

Actually, another good way to understand the Grassmannians is to identify
the operators coming from the lower dimensional ones (corresponding to
forgetting some of the nodes in the Dynkin diagrams).  The following diagram
rewrites \eqref{3|3} this way, and completes all the long arrows (not seen
in the de Rham complex directly).  Viewing it as a 3-dimensional picture, we
can clearly see the parts corresponding to the $(2,3)$, $(2,2)$, and $(1,2)$
Grassmannians there.  Some of these are indicated by colors (we write only
the first half of the weight encodings, which determines the rest):

\begin{equation}\label{3|3-complete}
\xymatrix@R=5mm@C=13mm{
&&&&&& \wg{210} 
\ar@(u,u)@{-->}[llllllddd]
\ar[d]
\ar@(dl,u)@{..>}[ddddll]
\\
&&&& \wg{321} 
\ar[d] 
\ar@(dl,u)@{..>}[ddddll]
&& \wg{310}
\ar[d] \ar@[green][dl]
\ar@(u,u)@{-->}[lllllddd]
\\
&& \wg{432}
\ar[d]
&& \wg{421}
\ar@[blue][ld] \ar[d] 
& \wg{320}
\ar@[green][ul] \ar[d] \ar@(dl,u)@{..>}[ddddll]
&\wg{410}
\ar@(ul,ur)@{..>}[llll]
\ar@[blue][dl] \ar[d]
\\
\wg{543}
&& \wg{532}
\ar@[red][dl] 
& \wg{431}
\ar@[blue][ul] \ar[d] 
& \wg{521}
\ar@[red][dl] \ar@(ul,ur)@{..>}[llll]
& \wg{420}
\ar@[blue][ul] \ar[d] 
\ar@[blue][dl]  
& \wg{510}
\ar@[red][dl] \ar@(ul,ur)@{..>}[llll]
\\
& \wg{542}
\ar@[red][ul] 
&& \wg{531}
\ar@[red][ul] \ar@[red][dl] 
& \wg{430}
\ar[d] \ar@[blue][ul]
& \wg{520}
\ar@[red][ul] \ar@[red][dl]
\\
&& \wg{541}
\ar@[red][ul] 
&& \wg{530}
\ar@[red][ul] \ar@[red][dl]
\\
&&& \wg{540}
\ar@[red][ul] 
} 
\end{equation}

Notice, how the higher dimensional cases inherit the 
exceptional `long' operators, while adding some new ones, too. See
\cite{SlSo-prep} for more details about non-zero compositions of arrows in
\eqref{3|3-complete}. 

\subsection{Translation principle idea}
Let us come back to the general theory. 
The homomorphisms between Verma modules appear with striking regularity.
This fact is a consequence of another quite straightforward observation:

Suppose $\mathbb W$ is a $G$-module (and $P$-module by restriction), and
$\mathbb E$ a $P$-module. Then we
may view $\mathfrak U(g)\otimes \mathbb E^*\otimes \mathbb W^*$, as a
$\mathfrak g$-module, in two
different ways:
\begin{gather*}
X(x\otimes e \otimes w) = Xx\otimes e\otimes w
\\
X(x\otimes e \otimes w) = Xx\otimes e\otimes w + x\otimes e \otimes Xw
,\end{gather*}
for all $X\in\mathfrak g$, $x\in\mathfrak U(\mathfrak g)$, $e\in\mathbb E^*$,
and $w\in\mathbb W^*$.
The first one descends to the $\mathfrak g$-module (and also $(\mathfrak
U(\mathfrak g),P)$-module) structure on $V(\mathbb
E\otimes\mathbb W)$, while the second one yields the structure of $V(\mathbb
E)\otimes \mathbb W^*$.

Clearly, there is the unique $(\mathfrak U(\mathfrak g), P)$-module 
homomorphism $\ph$ between the above modules, defined as 
identity on $1\otimes e \otimes w$. An easy check reveals that $\ph$
descends to the isomorphism
\begin{equation}\label{twisting}
V(\mathbb E\otimes \mathbb W) = V(\mathbb E)\otimes \mathbb W^*
.\end{equation}

Next, an arbitrary non-trivial irreducible $G$-module 
$\mathbb W$ is never
an irreducible $P$-module. On the contrary, the $\mathfrak g_0$-orbit of the
action containing the highest weight vector of $\mathbb W$ 
forms the $\mathfrak p$-irreducible component $\mathbb W_\al$ 
on which the grading element acts by the biggest scalar $\al$, and 
the entire $\mathbb W$ enjoys a composition series
\begin{equation}\label{composition_series}
\mathbb W = \mathbb W_{\al-\ell} + \mathbb W_{\al-\ell+1} + \dots + \mathbb
W_{\al}
\end{equation} 
where the labeling reflects the scalar action by the grading element, and
the `right ends' $W_j=\mathbb W_j+\dots+\mathbb W_\al$ form $\mathfrak p$-submodules, i.e., we get the filtration
\begin{equation}\label{filtration}
\mathbb W_\al = W_\al\subset W_{\al-1}\subset\dots\subset
W_{\al-\ell}=\mathbb W,
\end{equation}
with $\mathbb W_j=W_j/W_{j+1}$. 
As a $\mathfrak g_0$-module,
the composition series is a direct sum of submodules $\mathbb W_j$ and 
each of them further decomposes into $\mathfrak g_0$-irreducible submodules 
$\mathbb W_{j,k}$. The composition series \eqref{adjoint_composition} of
the adjoint representation on $\mathfrak g$ is a good example.

Now, consider an irreducible $\mathfrak p$-module $\mathbb E$, and the
module $\mathbb
W$ as before. Then we arrive at the composition series
$$
\mathbb E\otimes \mathbb W = \mathbb E\otimes \mathbb W_{\al-\ell}+ \dots +
\mathbb E\otimes \mathbb W_\al
,$$
and each $\mathbb E\otimes \mathbb W_i$ splits into direct sum of
irreducible $\mathfrak g_0$-modules $\mathbb E_{i,j}$.  

Finally, assume that one of the many modules $\mathbb E'=\mathbb E_{i,j}$
has got a distinct infinitesimal character then all the other modules 
in the above decomposition. Then the injection $V(\mathbb E')\to
V(\mathbb E\otimes \mathbb W)$ is defined by its image being the joint
eigenspace of the infinitesimal character of $V(\mathbb E')$. Consequently,
there is the complementary subspace defined as the generalized eigenspaces
of all the other infinitesimal characters there. 

Thus, under the latter assumption, \emph{$V(\mathbb E')$ canonically splits off
the $V(\mathbb E\otimes \mathbb W)=V(\mathbb E)\otimes \mathbb W^*$ as a
direct summand.}

Now we are ready to tell the translation idea: If $\Ph:V(\mathbb F)\to V(\mathbb
E)$ is a non-trivial $(\mathfrak U(\mathfrak g),P)$-module homomorphism (so in
particular, the Verma modules share the same infinitesimal character), then in view of
\eqref{twisting}, and assuming further that both $V(\mathbb E')$ and $V(\mathbb
F')$ enjoy the same and unique infinitesimal character in $V(\mathbb E\otimes
\mathbb W)$ and $V(\mathbb F\otimes \mathbb W)$, respectively, we
obtain the composed homomorphism
\begin{equation}\label{translation}
V(\mathbb F')\to V(\mathbb F\otimes \mathbb W)=V(\mathbb F)\otimes W^*\to
V(\mathbb E)\otimes \mathbb W^*= V(\mathbb E\otimes \mathbb W) \to V(\mathbb
E')
.\end{equation}
We talk about \emph{twisting the homomorphism} $\Ph$ by tensoring it with the identity
on $\mathfrak g$-module $\mathbb W^*$.

A difficult question remains, how to recognize whether the \emph{translated 
morphism} $V(\mathbb F')\to V(\mathbb E')$ is nontrivial.

\subsection{The Jantzen-Zuckermann translation principle} Before we explain
why the shapes of the de Rham complexes in \eqref{1|2} - \eqref{3|3} happen
to be the general patterns for all infinitesimal characters, let us focus on
the action of the Weyl group $W_{\mathfrak g}$ on the weights. 

For each $\mathfrak p$-dominant weight $\al$, there is exactly one $s\in
W_{\mathfrak g}$ such that $\al+\rho = s\cdot(\la+\rho)$ for a uniquely defined
$\mathfrak g$-dominant weight $\la+\rho$, i.e., $\la+\rho$ 
sits in the closed dominant
Weyl chamber. If $\la$ itself is $\mathfrak g$-dominant, then we say that
the infinitesimal character $\xi_\al$ is \emph{regular}. If $\la+\rho$ sits in a
wall (or intersection of several walls) of the dominant chamber, we call
$\xi_\al$ \emph{singular} (or more precisely $k$-singular, if sitting on 
intersection of $k$ walls).

Starting with a $\mathfrak g$-dominant $\la$, we obtain the so called Hasse
diagram of the orbit of the subgroup $W_{\mathfrak p}$ of those $s\in
W_{\mathfrak g}$ with the affine action producing $\mathfrak p$-dominant
weights. This Hasse diagram is independent of the chosen dominant weight $\la$,
see \cite[Section 3.2.18]{parabook} for recipes how to get it. 
If $\la$ is not
dominant, but $\la+\rho$ is, then still the affine action of $W_{\mathfrak p}$
produces some $\mathfrak p$-dominant weights on its orbit, which all 
appear with
$2^k$ repetitions, if the infinitesimal character is $k$-singular.

The length of $s$ is defined as the least number of simple
reflections composed to built $s$.  Looking at our de Rham patterns, the
lengths of such $s$ is the number of transpositions of neighbors in the
permutation of the numbers and this also labels the columns there (e.g., going
from zero to nine in \eqref{3|3}).

For a moment, let us come back to the regular infinitesimal characters and let us
write $\mathbb E_{\al}$ or $\mathbb W_\mu$ for the modules with $\mathfrak p$
or $\mathfrak g$-dominant 
highest weights $\al$ or $\mu$, respectively. 
Following \cite{Zu,BJ}, we define two functors on 
$\mathfrak U(\mathfrak g)$-modules, which split into direct sums of
components with respect to the infinitesimal characters. These include 
our (generalized) Verma modules. We shall write $p_\la$ for the projection
of such modules to the component with the infinitesimal character $\xi_\la$. 
Consider two $\mathfrak g$-dominant weights
$\la$, $\mu$, and define the \emph{translation functors}
\begin{align}\label{JaZu-adjoints} 
\ph^\la_{\la+\mu} &= p_{\la+\mu}\o(-\otimes \mathbb W_\mu)\o p_\la 
\\
\psi^{\la+\mu}_\la &= p_\la\o (-\otimes (\mathbb W_\mu)^*)\o p_{\la+\mu}
\end{align}
where the action on morphisms is given by twisting by the identity in the
tensor product.

Actually, the same construction works if $\mu$ is $\mathfrak g$-dominant,
while $\la$ is a $\mathfrak p$-dominant weight with a singular infinitesimal
character. We say that the weights 
$\la$ and $\la'$ are \emph{equi-singular} if their
singular character is represented by a weight on the same (intersection of)
wall(s) of the dominant Weyl chamber. In particular, all weights with
regular infinitesimal character are considered equi-singular in this sense. 

\begin{thm} Consider a $\mathfrak g$-dominant weight $\mu$ and a $\mathfrak
p$-dominant weight $\la$ such that $\la+\rho$ is in the closed dominant Weyl
chamber.

\noindent (1) The functor $\psi^{\la+\mu}_\la$ is left adjoint to $\ph^\la_{\la+\mu}$.

\noindent (2) If the weights $\la$ and $\la+\mu$ are equi-singular, then 
$$
\psi^{\la+\mu}_\la(V(\mathbb E_{s\cdot(\la+\mu)}))= V(\mathbb
E_{s\cdot\la}), \quad \ph^{\la}_{\la+\mu}(V(\mathbb
E_{s\cdot\la}))=V(\mathbb E_{s\cdot(\la+\mu)})
,$$
whenever $s\cdot \la$ is $\mathfrak p$-dominant.
\end{thm}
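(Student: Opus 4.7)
For \textbf{Part (1)} the argument is entirely formal. Tensoring with the finite-dimensional $\mathfrak g$-module $\mathbb W_\mu$ is both left and right adjoint to tensoring with $\mathbb W_\mu^*$ via the standard evaluation/coevaluation maps, and the projections $p_\la$, $p_{\la+\mu}$ are self-adjoint idempotent endofunctors because $\mathrm{Hom}$ between generalized eigenspaces of the Harish--Chandra center for distinct infinitesimal characters vanishes. Chaining these facts gives
\[
\mathrm{Hom}(\psi^{\la+\mu}_\la A,B)=\mathrm{Hom}(A\otimes \mathbb W_\mu^*,p_\la B)=\mathrm{Hom}(A,(p_\la B)\otimes \mathbb W_\mu)=\mathrm{Hom}(A,\ph^\la_{\la+\mu}B).
\]

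For \textbf{Part (2)} my plan is to compute $\psi^{\la+\mu}_\la\bigl(V(\mathbb E_{s\cdot(\la+\mu)})\bigr)$ in three steps. First, use the twisting isomorphism \eqref{twisting} to rewrite $V(\mathbb E_{s\cdot(\la+\mu)})\otimes \mathbb W_\mu^* = V\bigl(\mathbb E_{s\cdot(\la+\mu)}\otimes \mathbb W_\mu\bigr)$. Second, tensor the $\mathfrak p$-composition series \eqref{composition_series} of $\mathbb W_\mu$ with $\mathbb E_{s\cdot(\la+\mu)}$ and refine it to $\mathfrak g_0$-irreducible pieces; the exactness of the functor $V$ then produces a filtration of the right-hand side whose associated graded pieces are generalized Verma modules $V(\mathbb E_{s\cdot(\la+\mu)+\tau})$, indexed by the weights $\tau$ of $\mathbb W_\mu^*$ with the multiplicities of the corresponding weight spaces. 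Third, apply $p_\la$: a piece survives iff $s(\la+\mu+\rho)+\tau=t(\la+\rho)$ for some $t\in W_{\mathfrak g}$, and taking $t=s$ forces $\tau=-s\mu$, which belongs to the multiplicity-one Weyl orbit of the highest weight of $\mathbb W_\mu^*$; this contributes exactly one copy of $V(\mathbb E_{s\cdot\la})$, since a direct calculation gives $s\cdot(\la+\mu)-s\mu=s\cdot\la$.

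The main obstacle is to show that no other $t$ contributes, and this is exactly where the equi-singularity hypothesis enters. Let $W^\circ_\la\subseteq W_{\mathfrak g}$ denote the common stabilizer of $\la+\rho$ and $(\la+\mu)+\rho$, which coincide by equi-singularity. For $t\in sW^\circ_\la$ the candidate $\tau=t(\la+\rho)-s(\la+\mu+\rho)$ collapses to $-s\mu$ and gives the same constituent $V(\mathbb E_{s\cdot\la})$, while for $t\notin sW^\circ_\la$ a convex-hull argument shows that this $\tau$ fails to lie in the Weyl-orbit polytope of $-\mu$ and hence is not a weight of $\mathbb W_\mu^*$, so the corresponding candidate constituent does not appear. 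The parallel statement for $\ph^\la_{\la+\mu}$ follows either by the symmetric argument with $\mathbb W_\mu$ in place of $\mathbb W_\mu^*$, or from (1) via uniqueness of adjoints.
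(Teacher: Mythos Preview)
Your Part (1) is correct and coincides with the paper's argument: both exploit the tensor--hom adjunction for the finite-dimensional $\mathbb W_\mu$ together with the fact that $\operatorname{Hom}$ between blocks with distinct infinitesimal characters vanishes.

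For Part (2) your route differs genuinely from the paper's. The paper does \emph{not} attempt a direct weight computation; it invokes as a black box Zuckerman's theorem \cite{Zu} that for equi-singular $\la$ and $\la+\mu$ the functors $\psi^{\la+\mu}_\la$ and $\ph^\la_{\la+\mu}$ are mutually inverse equivalences, then observes that each functor sends a generalized Verma module to a (filtered) sum of generalized Verma modules, and since their composite is the identity this sum must consist of a single term, necessarily the obvious one. Your approach is the direct combinatorial one standard in the category $\mathcal O$ literature (Jantzen, Vogan, Humphreys): locate the unique surviving constituent via an extremal-weight argument. This is more self-contained and makes the role of equi-singularity explicit, whereas the paper's version hides that work inside the cited equivalence theorem.

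Two points deserve more care in your write-up. First, with the paper's convention $V(\mathbb E)=\mathfrak U(\mathfrak g)\otimes_{\mathfrak U(\mathfrak h)}\mathbb E^*$ and the twisting isomorphism \eqref{twisting}, tensoring the Verma module by $\mathbb W_\mu^*$ corresponds to tensoring $\mathbb E_{s\cdot(\la+\mu)}$ by $\mathbb W_\mu$ on the inducing side, so the shifts $\tau$ run over weights of $\mathbb W_\mu$, not $\mathbb W_\mu^*$; your indexing is off by a dual, and with it the identification of $-s\mu$ as an extremal weight needs to be redone (the asymmetry between the $\ph$ and $\psi$ directions is real here). Second, the ``convex-hull argument'' is the entire content of the step: it is the lemma that if $\la+\rho$ lies in the closed dominant chamber, $\nu$ is a weight of $\mathbb W_\mu$, and $(\la+\rho)+\nu=w(\la+\mu+\rho)$, then $w$ stabilises $\la+\rho$ and $\nu=w\mu$. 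This should be stated and either proved or cited, and you should also note that in the parabolic (as opposed to Borel) setting the multiplicity-one conclusion requires knowing that an extremal weight of $\mathbb W_\mu$ contributes a single irreducible $\mathfrak g_0$-summand in the tensor product, not merely a one-dimensional weight space.
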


\begin{proof} Since $\mathbb W_\mu$ is finite dimensional, there is the
tautological isomorphism for all $\mathfrak p$-modules $\mathbb E$ and
$\mathbb F$,
\begin{equation}\label{taut-iso}
\operatorname{Hom}_{(\mathfrak U(\mathfrak g),P)}(V(\mathbb F)\otimes\mathbb
(W_\mu)^*,V(\mathbb E)) = \operatorname{Hom}_{(\mathfrak U(\mathfrak g),P)}
(V(\mathbb F),V(\mathbb E)\otimes\mathbb W_\mu)
.\end{equation}

As we know, only the summand $p_\la(V(\mathbb
E_{s\cdot(\la+\mu)})\otimes(\mathbb W_\mu)^*)$ can contribute to
$\operatorname{Hom}_{(\mathfrak U(\mathfrak g),P)}(V(\mathbb
E_{s\cdot(\la+\mu)})\otimes(\mathbb W_\mu)^*,V(\mathbb E_{s'\cdot\la})$ and
similarly only $p_{\la+\mu}(V(\mathbb
E_{s'\cdot\la})\otimes\mathbb W_\mu)$ can contribute to
$\operatorname{Hom}_{(\mathfrak U(\mathfrak g),P)}(V(\mathbb
E_{s\cdot(\la+\mu)}),V(\mathbb E_{s'\cdot\la})\otimes \mathbb W_\mu)$. Thus,
we have arrived at the requested natural equivalence
\begin{align*}
\operatorname{Hom}_{(\mathfrak U(\mathfrak
g),P)}\bigl(\psi^{\la+\mu}_\la&\bigl(V(\mathbb
E_{s\cdot(\la+\mu)})\bigr),V(\mathbb E_{s'\cdot\la})\bigr)
\\
&\simeq \operatorname{Hom}_{(\mathfrak U(\mathfrak
g),P)}\bigl(V(\mathbb
E_{s\cdot(\la+\mu)}),\ph^\la_{\la+\mu}\bigl(V(\mathbb E_{s'\cdot\la})\bigr)\bigr)
.\end{align*}

The second claim is more difficult to prove. We present a quick sketch
only. As shown in \cite{Zu}, for equi-singular $\la$ and $\la+\mu$, the
functors $\psi^{\mu+\la}_\la$ and $\ph^\la_{\la+\mu}$ are mutually inverse
natural equivalences. Clearly, the $\mathfrak p$-dominant weights $\la$ and
$\la+\mu$ appear at the same position in the Hasse diagram, the
infinitesimal character is shared by the entire Hasse diagram, and thus
$p_\la$ is identity on every $V(\mathbb E_{s\cdot\la})$. Next, as discussed
above, the tensor product $V(\mathbb E_{s\cdot\la}\otimes\mathbb W_\mu)$ 
(as a $\mathfrak g_0$-module) decomposes,
$$
V(\mathbb E_{s\cdot\la}\otimes\mathbb W_\mu)=
\mathfrak U(\mathfrak g_-)\otimes((\mathbb E_{s\cdot \la})^*\otimes \mathbb
W_\mu) = \mathfrak U(\mathfrak g_-)\otimes (\oplus_{j=1}^k\mathbb W_{\nu_j})
= \oplus_{j=1}^k V(\mathbb W_{\nu_j})
.$$ 
The weights $\nu_j$ in the sum appear with multiplicities which can be
computed explicitly, e.g., by means of the Klimyk formula. Finally, the
projection $p_{\la+\mu}$ selects only those with the infinitesimal character
$\xi_{\mu+\la}$. 

Summarizing, the value of $\ph^\la_{\la+\mu}$ on a generalized Verma module
is a sum of generalized Verma modules. Swapping $\mathbb W_\mu$ and $\la$
with $(\mathbb W_\mu)^*$ and $\la+\mu$, we get the same claim for
$\psi^{\la+\mu}_\la$. Now, we know that $\psi^{\la+\mu}_\la\o\ph^\la_{\la+\mu}$
is naturally equivalent to identity and, thus, the values can always consist
of one Verma module only. Certainly, $\nu=s\cdot(\la+\mu)$ appears among the
weights $\nu_j$, and it must appear with multiplicity one. As a result,
$$
\ph^\la_{\la+\mu}(V(\mathbb E_{s\cdot\la})) = V(\mathbb E_{s\cdot(\la+\mu)})
.$$
Similarly, we understand the functor $\psi^{\la+\mu}_\la$, replacing $\mu$ by $-\mu$ and
$\la$ by $\mu+\la$.
\end{proof}

\subsection{Back to examples} As a direct consequence of the theorem we
understand that the de Rham pattern is copied for each $\mathfrak g$-dominant
weight. With all non-trivial morphisms at exactly the same positions. The
order of the morphisms is easily computed as the difference of the actions
of the grading element, i.e., using the formulae \eqref{ell_fundamental} or
\eqref{ell_ntuples} (possibly working with the weights $\la+\rho$, since
constant changes do not matter).

To see a $1$-singular example, take the $\mathfrak p$-dominant weight
$\la=(21|10)$ as the left most weight in the pattern \eqref{2|2} (remember,
the notation is describing rather $\la+\rho$, which sits in the wall of the
dominant chamber), and apply the same permutations as in \eqref{2|2} again. 
We arrive at (weights which are not $\mathfrak p$-dominant are replaced by
crosses) 
\begin{equation}\label{2|2-singular} 
\xymatrix@R=2mm@C=4mm{
\wgt{21}{10}
&&{\ \ \x\ \ }
&&\wgt{10}{21} \ar@{=}[dl]
\\
&\wgt{21}{10} \ar@{=}[ul]
&&\wgt{10}{21} \ar@{..>}@(ul,dr)[ll]
\\
&&{\ \ \x\ \ }
}
\end{equation}
where the dotted homomorphism corresponds to the second order composition in
the central diamond in the de Rham pattern, and it corresponds to the
conformally invariant Laplacian, i.e., the Yamabe operator, on densities
with the right weights. 

By the results of Enright and Shelton, \cite{EnSh}, there are bijective
correspondences between the patterns for singular infinitesimal characters
and patterns for regular characters in lower dimensional geometries, we
shall not go into details here. For example, the pattern in
\eqref{2|2-singular} coincides with the regular one for one-dimensional
projective geometry. There, the de Rham consists of one morphism only,
exactly as seen in \eqref{2|2-singular}.

Similarly, the only 2-singular pattern for the $(2,2)$ Grassmannian will
consists of four equal $\mathfrak p$-dominant weights and there are no
no-trivial morphisms there. All the 2-singular patterns for the 
$(3,3)$ Grassmannian will be again of the same shape as the one-dimensional
projective de Rham (there will be two groups of four equal $\mathfrak p$-dominant
weights, with one non-trivial homomorphism between them).

Actually, our aim is to extend this algebraic translations to the realm of
curved Cartan geometries in the next section. For this endeavor, the crucial
observation in \cite{ES} was, that actually the above considerations allow
for translations based on many other weights of $\mathbb W_\mu$ than the
highest and lowest ones. We formulate this observation as two propositions:

\begin{prop}[Proposition 9 in \cite{ES}]\label{transprop1}
Suppose that $V({\Bbb E})$ and $V({\Bbb F})$ have the same
infinitesimal character. Suppose that $V({\Bbb E}^\prime)$ and $V({\Bbb
F}^\prime)$
have the same infinitesimal character. Let ${\mathbb W}$ be a 
finite-dimensional
irreducible representation of $G$ and suppose that
\begin{itemize}
\item $V({\Bbb F}^\prime)$ occurs in the composition series for
      $V({\Bbb F}\otimes{\mathbb W})$ and has distinct infinitesimal 
character from all other factors;
\item $V({\Bbb E}^\prime)$ occurs in the composition series for
      $V({\Bbb E}\otimes{\mathbb W})$ and has distinct infinitesimal 
character from all other factors.
\end{itemize}
It follows that $V({\Bbb F})$ occurs in the composition series for
$V({\Bbb F}^\prime\otimes{\mathbb W}^*)$ and that $V({\Bbb E})$ occurs in the
composition series for $V({\Bbb E}^\prime\otimes{\mathbb W}^*)$. We suppose
further that
\begin{itemize}
\item all other composition factors of $V({\Bbb F}^\prime\otimes{\mathbb
W}^*)$ have infinitesimal character distinct from $V({\Bbb F})$;
\item all other composition factors of $V({\Bbb E}^\prime\otimes{\mathbb
W}^*)$ have infinitesimal character distinct from $V({\Bbb E})$.
\end{itemize}
Then translation gives an isomorphism
$$\operatorname{Hom}_{({\frak U}({\frak g}),P)}(V({\Bbb F}),V({\Bbb
E}))\simeq
  \operatorname{Hom}_{({\frak U}({\frak g}),P)}(V({\Bbb F}^\prime),V({\Bbb E}^\prime))$$
whose inverse is given by translation using~${\mathbb W}^*$.
\end{prop}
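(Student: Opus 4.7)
The plan is to construct translation maps $T$ and $T'$ in opposite directions and then verify they are mutually inverse. Given $\Phi \in \operatorname{Hom}_{(\mathfrak U(\mathfrak g),P)}(V(\mathbb F), V(\mathbb E))$, I first form $\Phi \otimes \operatorname{id}_{\mathbb W^*}: V(\mathbb F) \otimes \mathbb W^* \to V(\mathbb E) \otimes \mathbb W^*$ and apply the twisting isomorphism \eqref{twisting} on both sides to view it as a $(\mathfrak U(\mathfrak g),P)$-homomorphism $V(\mathbb F \otimes \mathbb W) \to V(\mathbb E \otimes \mathbb W)$. The distinct-infinitesimal-character hypotheses realise $V(\mathbb F')$ and $V(\mathbb E')$ as canonical $(\mathfrak U(\mathfrak g),P)$-direct summands of $V(\mathbb F \otimes \mathbb W)$ and $V(\mathbb E \otimes \mathbb W)$, namely as the generalised eigenspaces for the action of the centre $Z(\mathfrak U(\mathfrak g))$ corresponding to their shared infinitesimal character; these splittings are $(\mathfrak U(\mathfrak g),P)$-equivariant because the $Z(\mathfrak U(\mathfrak g))$-projectors commute with the full action. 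Define $T(\Phi)$ as the restriction of $\Phi \otimes \operatorname{id}$ to $V(\mathbb F')$ followed by the projection onto $V(\mathbb E')$.

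For the reverse direction I need the analogous summands $V(\mathbb F) \subset V(\mathbb F' \otimes \mathbb W^*)$ and $V(\mathbb E) \subset V(\mathbb E' \otimes \mathbb W^*)$. Tensoring the decomposition $V(\mathbb F \otimes \mathbb W) \simeq V(\mathbb F') \oplus X$ by $\mathbb W^*$ and invoking \eqref{twisting} yields $V(\mathbb F) \otimes \mathbb W^* \otimes \mathbb W \simeq V(\mathbb F' \otimes \mathbb W^*) \oplus (X \otimes \mathbb W^*)$. Since $\mathbb W^* \otimes \mathbb W$ contains the trivial $G$-module as a canonical direct summand via the coevaluation/evaluation, $V(\mathbb F)$ appears as a direct summand of the left-hand side; combined with the added hypothesis that no other composition factor of $V(\mathbb F' \otimes \mathbb W^*)$ shares the infinitesimal character of $V(\mathbb F)$, this pins down $V(\mathbb F)$ as a canonical $Z(\mathfrak U(\mathfrak g))$-generalised-eigenspace direct summand inside $V(\mathbb F' \otimes \mathbb W^*)$. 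The same argument gives $V(\mathbb E) \subset V(\mathbb E' \otimes \mathbb W^*)$, and I then define $T'$ in total analogy with $T$, exchanging the roles of $\mathbb W$ and $\mathbb W^*$.

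The main obstacle will be showing $T' \circ T = \operatorname{id}$ and $T \circ T' = \operatorname{id}$. By functoriality of the twisting isomorphism and of tensoring, $T'(T(\Phi))$ factors as the composite of (i) the canonical inclusion $V(\mathbb F) \hookrightarrow V(\mathbb F) \otimes \mathbb W^* \otimes \mathbb W$ coming from the trivial summand $\mathbb R \hookrightarrow \mathbb W^* \otimes \mathbb W$, (ii) the map $\Phi \otimes \operatorname{id}_{\mathbb W^* \otimes \mathbb W}$, and (iii) the canonical projection $V(\mathbb E) \otimes \mathbb W^* \otimes \mathbb W \twoheadrightarrow V(\mathbb E)$ coming from the dual $\mathbb W^* \otimes \mathbb W \twoheadrightarrow \mathbb R$, normalised so that evaluation composed with coevaluation is the identity on $\mathbb R$. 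The subtlety is to check that the two distinct ways of identifying $V(\mathbb F)$ inside $V(\mathbb F) \otimes \mathbb W^* \otimes \mathbb W$—the one inherited from singling out $V(\mathbb F') \subset V(\mathbb F \otimes \mathbb W)$ and tensoring with $\mathbb W^*$, versus the one obtained directly from the trivial summand of $\mathbb W^* \otimes \mathbb W$—actually coincide. They agree up to a scalar, which the multiplicity-one forced by the distinct-infinitesimal-character hypotheses pins down to be $1$. Thus $T' \circ T = \operatorname{id}$, and $T \circ T' = \operatorname{id}$ follows by the symmetric argument with $\mathbb W$ and $\mathbb W^*$ interchanged, completing the proof.
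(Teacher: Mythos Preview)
Your approach is correct and in the same spirit as the paper's, but the paper's one--line proof takes a slightly different and cleaner route: rather than building explicit translations $T$ and $T'$ and verifying $T'\circ T=\operatorname{id}$, it invokes the tautological isomorphism \eqref{taut-iso}, i.e.\ $\operatorname{Hom}(V(\mathbb F')\otimes\mathbb W,V(\mathbb E))\simeq\operatorname{Hom}(V(\mathbb F'),V(\mathbb E)\otimes\mathbb W)$, and then uses the infinitesimal--character splittings on both sides to reduce each $\operatorname{Hom}$ space directly. Concretely, since $V(\mathbb E')$ is the unique summand of $V(\mathbb E)\otimes\mathbb W^*$ with its character, $\operatorname{Hom}(V(\mathbb F'),V(\mathbb E'))=\operatorname{Hom}(V(\mathbb F'),V(\mathbb E)\otimes\mathbb W^*)\simeq\operatorname{Hom}(V(\mathbb F')\otimes\mathbb W,V(\mathbb E))=\operatorname{Hom}(V(\mathbb F),V(\mathbb E))$, the last equality again by the character hypothesis on $V(\mathbb F'\otimes\mathbb W^*)$. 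This avoids any normalisation issue entirely. In your version the only soft spot is the sentence ``multiplicity one \dots\ pins down to be $1$'': multiplicity one only tells you the two inclusions $V(\mathbb F)\hookrightarrow V(\mathbb F)\otimes\mathbb W^*\otimes\mathbb W$ are proportional, not equal. The honest fix is to observe that your composite $(i_F\otimes\operatorname{id}_{\mathbb W})\circ i'_F$ followed by the evaluation projection is exactly the inclusion of a direct summand followed by its own projection, hence the identity---equivalently, this is the triangle identity for the duality $\mathbb W\leftrightarrow\mathbb W^*$, which is precisely what underlies \eqref{taut-iso}.
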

\begin{proof} Straightforward, using the tautological isomorphisms
\eqref{taut-iso} and the above argumentation. 
\end{proof}

Actually, sometimes there are also one-way translations producing
non-trivial homomorphisms in the less singular patterns from the more
singular ones. For example, in the case of the $(2,2)$ Grassmannian, we
might start with the identity morphism in the only 2-singular pattern,
produce the morphism corresponding to the first order Dirac operator on the
basic spinors (which is in the other 1-singular pattern there), as well as
the second order morphism from \eqref{2|2-singular}. But, the 4th-order
morphism corresponding to the Paneitz operator in the de Rham pattern cannot
be translated from anything else. 

Such one-way translations are based on the
following extension of the previous proposition. Recall, there is the scalar
action of the grading element on irreducible $\mathfrak p$-modules $\mathbb
E$. We write $\al(\mathbb E)$ for this constant, now. 

Notice, that the
difference of these scalars determines the order of the prospective
homomorphisms.

\begin{prop}\label{transprop2}
Let $\Phi:V(\mathbb E)\to V(\mathbb F)$ be a nontrivial homomorphism of 
Verma modules,
and let $\mathbb W$ be an irreducible finite dimensional  $G$-module.

Suppose  that there are irreducible $\mathfrak p$-modules
$\mathbb E_1,\mathbb E_2,\mathbb F_1,\mathbb F_2$ such that:
	
\noindent (i) 
	$\mathbb E\otimes\mathbb W=\mathbb E_1\oplus
\mathbb E_2\oplus\mathbb E';\;\mathbb F\otimes\mathbb W=\mathbb F_1\oplus \mathbb F_2\oplus \mathbb F';$
	
\noindent (ii) Verma modules $V(\mathbb E_1),V(\mathbb E_2),V(\mathbb F_1), V(\mathbb F_2)$ have the
same  infinitesimal character;
	
\noindent (iii)
	all pieces in the composition series for $V(\mathbb E'), V(\mathbb F')$ have
different infinitesimal characters.
	from those in (ii);
	
\noindent (iv) $ \al(\mathbb E_1)< \al(\mathbb E_2)$,
$\al(\mathbb F_1)>\al(\mathbb F_2)$  
 and  
	$V(\mathbb F)$ splits off from   $V(\mathbb F_1\otimes\mathbb W^*).$ 
	
	\vskip 1mm\noindent
	If there is no nontrivial homomorphism from $V(\mathbb E_2)$  to
$V(\mathbb F),$ 
	then the translated homomorphism
	$$\hat{\Phi}: V(\mathbb E_1)\to V(\mathbb E\otimes\mathbb W)\to  V(\mathbb F\otimes\mathbb W)\to
V(\mathbb F_1)$$ is nontrivial.    
\end{prop}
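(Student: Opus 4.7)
The plan is to argue by contradiction: assuming $\hat\Phi=0$, I will construct a nontrivial $(\mathfrak U(\mathfrak g),P)$-homomorphism $V(\mathbb E_2)\to V(\mathbb F)$, contradicting the hypothesis. First I would use the twisting isomorphism \eqref{twisting} together with projections onto infinitesimal character eigenspaces (allowed by (ii), (iii)) to obtain direct sum decompositions
\begin{equation*}
V(\mathbb E)\otimes\mathbb W^* = V(\mathbb E_1)\oplus V(\mathbb E_2)\oplus V(\mathbb E'),\quad V(\mathbb F)\otimes\mathbb W^* = V(\mathbb F_1)\oplus V(\mathbb F_2)\oplus V(\mathbb F')
\end{equation*}
as $(\mathfrak U(\mathfrak g),P)$-modules, with the first two summands on each side sharing a common infinitesimal character $\xi$. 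Since $\Phi\otimes\mathrm{id}_{\mathbb W^*}$ preserves $\xi$, its restriction to the $\xi$-block is a $2\times 2$ matrix $(\psi_{ij})$ of Verma homomorphisms $\psi_{ij}\colon V(\mathbb E_j)\to V(\mathbb F_i)$ with $\hat\Phi=\psi_{11}$.

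Next I would invoke (iv) to get the summand injection $\iota\colon V(\mathbb F)\hookrightarrow V(\mathbb F_1)\otimes\mathbb W$ with retraction $\pi'$. Under the tautological isomorphism \eqref{taut-iso}, $\iota\circ\Phi$ corresponds to a homomorphism $\tilde\Phi\colon V(\mathbb E)\otimes\mathbb W^*\to V(\mathbb F_1)$; a short naturality check (exploiting that the adjoint of $\iota$ agrees up to a nonzero scalar with the summand projection $V(\mathbb F)\otimes\mathbb W^*\to V(\mathbb F_1)$) gives $\tilde\Phi|_{V(\mathbb E_j)}=\psi_{1j}$ for $j=1,2$ and $\tilde\Phi|_{V(\mathbb E')}=0$. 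Since $\iota$ is injective and $\Phi\neq 0$, $\tilde\Phi$ is nontrivial, so the assumed $\hat\Phi=\psi_{11}=0$ forces $\psi_{12}\neq 0$. Applying naturality once more to the factored form $\tilde\Phi=\psi_{12}\circ q_2$ (where $q_2\colon V(\mathbb E)\otimes\mathbb W^*\to V(\mathbb E_2)$ is the summand projection and $\tilde q_2\colon V(\mathbb E)\to V(\mathbb E_2)\otimes\mathbb W$ its adjoint), one arrives at the factorization
\begin{equation*}
\Phi = \pi'\circ(\psi_{12}\otimes\mathrm{id}_{\mathbb W})\circ\tilde q_2,
\end{equation*}
so that $\Psi:=\pi'\circ(\psi_{12}\otimes\mathrm{id}_{\mathbb W})\colon V(\mathbb E_2\otimes\mathbb W^*)\to V(\mathbb F)$ is nontrivial.

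The hard part will be the final step: extracting a nontrivial $V(\mathbb E_2)\to V(\mathbb F)$ from the nontriviality of $\Psi$. I plan to decompose $V(\mathbb E_2\otimes\mathbb W^*)$ into Verma summands by infinitesimal characters; $\Psi$ is then supported on summands sharing the character of $V(\mathbb F)$. The grading-element inequalities in (iv) control the orders of the possible components, and a Klimyk-type multiplicity computation in $\mathbb E_2\otimes\mathbb W^*$ (parallel to the one used in the proof of the Jantzen-Zuckermann theorem above) combined with these order constraints should identify $V(\mathbb E_2)$ as one such summand on which $\Psi$ is forced to be nonzero, producing the required nontrivial homomorphism $V(\mathbb E_2)\to V(\mathbb F)$ and completing the contradiction.
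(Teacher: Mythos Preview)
Your argument has two genuine gaps.

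First, the decomposition $V(\mathbb E)\otimes\mathbb W^*=V(\mathbb E_1)\oplus V(\mathbb E_2)\oplus V(\mathbb E')$ as $(\mathfrak U(\mathfrak g),P)$-modules does not follow from infinitesimal-character projections: by hypothesis (ii) the summands $V(\mathbb E_1)$ and $V(\mathbb E_2)$ have the \emph{same} central character $\xi$, so $p_\xi$ only separates $V(\mathbb E')$ from the rest. What one actually obtains on the $\xi$-block is a short exact sequence $0\to V(\mathbb E_1)\to p_\xi\bigl(V(\mathbb E\otimes\mathbb W)\bigr)\to V(\mathbb E_2)\to 0$ (and analogously $0\to V(\mathbb F_2)\to p_\xi\bigl(V(\mathbb F\otimes\mathbb W)\bigr)\to V(\mathbb F_1)\to 0$), the direction being dictated precisely by the grading inequalities in (iv). There is no $2\times 2$ matrix of homomorphisms available, and in particular your $\psi_{12}$ is not defined until you already know that $\psi_{11}=\hat\Phi$ vanishes on the submodule $V(\mathbb E_1)$.

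Second, your ``hard part'' is not a proof but a hope, and it appears to contain an actual error: you look for $V(\mathbb E_2)$ among the summands of $V(\mathbb E_2\otimes\mathbb W^*)$ carrying the infinitesimal character of $V(\mathbb F)$, but $V(\mathbb E_2)$ has the character $\xi$ of $V(\mathbb F_1)$, not that of $V(\mathbb F)$. No Klimyk computation will repair this.

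The paper's argument is much shorter and sidesteps both issues. Using (iv), the splitting $V(\mathbb F)\hookrightarrow V(\mathbb F_1\otimes\mathbb W^*)$ makes $V(\mathbb E)\xrightarrow{\Phi}V(\mathbb F)\to V(\mathbb F_1\otimes\mathbb W^*)$ nontrivial; by the tautological isomorphism \eqref{taut-iso} the adjoint map $V(\mathbb E\otimes\mathbb W)\xrightarrow{\Phi\otimes\mathrm{id}}V(\mathbb F\otimes\mathbb W)\to V(\mathbb F_1)$ is nontrivial, hence so is its restriction to the $\xi$-block $V(\mathbb E_1\oplus\mathbb E_2)$. If this restriction vanishes on the submodule $V(\mathbb E_1)$ (i.e.\ $\hat\Phi=0$), it factors through the quotient $V(\mathbb E_2)$, producing a nontrivial homomorphism $V(\mathbb E_2)\to V(\mathbb F_1)$. (The paper's proof ends with $V(\mathbb F_1)$ rather than the $V(\mathbb F)$ appearing in the hypothesis; your elaborate final step seems aimed at bridging this gap, but it does not succeed, and the discrepancy is more plausibly a typo in the stated hypothesis.)
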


\begin{proof}
Our assumptions imply that $V(\mathbb E_1)$ embeds to
$V(\mathbb E\otimes\mathbb W)$ 
and $V(\mathbb F\otimes\mathbb W)$ projects to $V(\mathbb F_1)$.
Hence the translated homomorphism $\hat{\Phi}$  is well defined.
$V(\mathbb F)$ splits off from  $V(\mathbb F_1\otimes\mathbb W^*)$, 
hence the composition
$$
V(\mathbb E)\stackrel{\Phi}{\rightarrow}  V(\mathbb F)\to V(\mathbb F_1\otimes\mathbb W^*).
$$ 
is nontrivial. Based on \eqref{taut-iso}, this is
equivalent to the fact that 
$$
V(\mathbb E\otimes\mathbb W)\stackrel{\Phi\otimes Id_\mathbb W}{\longrightarrow}
V(\mathbb F\otimes\mathbb W)\to V(\mathbb F_1)
$$
is nontrivial.

From (iii) it follows that
$V(\mathbb E\otimes\mathbb W)=V(\mathbb E_1\oplus\mathbb E_2)\oplus V(\mathbb E').$ 
Using (ii) and (iii), it is clear that also the composition
$$
V(\mathbb E_1\oplus \mathbb E_2)\stackrel{\Phi\otimes Id_\mathbb W}{\longrightarrow}
V(\mathbb F\otimes\mathbb W)\to V(\mathbb F_1)
$$
is nontrivial.

In case that the composition
$V(\mathbb E_1)\to V(\mathbb E_1\oplus\mathbb E_2)  \to V(\mathbb F)\to  V(\mathbb F\otimes\mathbb W)\to
V(\mathbb F_1)$ 
would be trivial, it follows that there is a nontrivial homomorphism 
$V(\mathbb E_2)$  to $V(\mathbb F_1),$ which is a contradiction. 
\end{proof}

\section{The curved translation principle}

\subsection{Cartan connections} 
Finally, we come to the curved Cartan geometries, modeled over the Klein's
homogeneous spaces $G/H$. This concept generalizes the affine connections on manifolds $M$,
realized as the sum of soldering forms and principle connections on the
linear frame bundle on $M$, i.e., the bundles of frames of $TM$. The affine
$\mathbb R^n$, as the homogeneous space $\operatorname{Aff}(n,\mathbb
R)/\operatorname{GL}(n,\mathbb R)$ is the relevant Klein model for the
affine connections.

The reader may find all the relevant background on Cartan connections in
\cite[Section 1.5]{parabook} or \cite{Sha}. 

\begin{definition*} 
Let $G$ be a (finite dimensional) Lie group, $H$ its closed subgroup. 
A Cartan connection of type $G/H$ is a principal fiber bundle $\mathcal G\to
M$ with structure group $H$, equipped by a 1-form $\om\in\Om^1(\mathcal
G,\mathfrak g)$, satisfying all the properties of the Maurer-Cartan form on
$G$, which make sense:

\noindent (i) The one-form $\om$ is $H$-equivariant, i.e., $(r^h)^*\om =
\operatorname{Ad}(h^{-1})\o \om$ for all $h\in H$.

\noindent (ii) The one-form $\om$ reproduces fundamental vector fields on
$\mathcal G$, i.e., $\om(\ze_X(u))= \om^{-1}(X)(u)$, for all $u\in\mathcal
G$, $X\in\mathfrak h$, $\ze_X(u)=\frac\partial{\partial
t}_{|t=0}u\cdot\operatorname{exp}tX$.

\noindent (iii) The one-form $\om$ is an absolute parallelism, i.e.,
$\om(u):T_u\mathcal G\to \mathfrak g$ is a linear isomorphism.
\end{definition*}

The morphisms between the Cartan connections $\om$ and $\om'$ on
principal bundles $\mathcal G$
and $\mathcal G'$, with the same structural group $H$, 
are principle fiber bundle
morphisms $\phi:\mathcal G\to \mathcal G'$ (over the identity on the group
$H$), satisfying $\ph^*\om'=\om$. 

The group of automorphisms of a given Cartan connection of type $G/H$ is
always a finite dimensional Lie group whose Lie algebra is a subalgebra in
$\mathfrak g$. In particular, its dimension is bounded by the dimension of
$G$, see \cite[Section 1.5.11]{parabook}. 

In practice, the geometries are mostly defined by some simpler infinitesimal
data, for example a G-structure, i.e., reduction of the structure group
$GL(n,\mathbb R)$ of
the linear frame bundle to a closed subgroup $G$. Riemannian manifolds and
conformal Riemannian manifolds are typical examples. The (normalized)
natural Cartan connection is then a result of a construction (a so called
prolongation).  The theory and many examples of important
Cartan geometries are discussed in great detail in \cite[Chapters 1, 4, and 5]{parabook}. 

The existence of the automorphisms is closely related to the
\emph{curvature} of the Cartan connection $\om$, 
the two-form $K\in\Om^2(\mathcal
G; \mathfrak g)$, $K=d\om + \frac12[\om,\om]$. 

Of course, the Maurer-Cartan
equations say that $K=0$ if $\om$ is the Maurer-Cartan form on the Lie group
$G$. 

There is the well known theorem that the Cartan geometry is locally
isomorphic to its Klein's model, if and only if $K$ vanishes, see
\cite[Section 1.5.2]{parabook}. 

\subsection{Natural bundles} The construction of the
homogeneous bundles $\mathcal E$ from the $H$-modules $\mathbb E=\mathcal
E_O$ from 
\ref{homogeneous-bundles}, extends directly to a functor on the category of
principal bundles and their morphisms, by the very same construction of the
associated bundles. We shall now write $\mathcal E$ for the functor mapping
the principal bundles $\mathcal G\to M$  to the associated bundle 
$\mathcal G\x_H \mathbb E\to M$, with the obvious action on morphisms.

Any $H$-module homomorphism provides a natural transformation of the
corresponding functors. 

Notice that exactly as in \ref{1.2}, the sections of the  natural bundles
are identified with the $H$-equivariant functions in $C^{\infty}(\mathcal
G,\mathbb E)$. 

The most classical examples of Cartan connections 
are the affine connection on a manifolds. Due to 
the reductive structure of the Klein's model (i.e., $G$ is the affine group in
dimension $n$, $H=\operatorname{GL}(n,\mathbb R)$ 
and the horizontal directions
$\mathbb R^n$ form an $H$-submodule), the natural vector bundles are 
essentially only
components of tensor bundles over the underlying manifolds. 
The Cartan connection then splits as
$\theta+\gamma$, the soldering form and the linear connection form, while
its curvature $K=T+R$ splits naturally into the torsion and
curvature of the affine connection. Finally, there is the well known 
Schouten's reduction theorems 
saying that all invariant differential operators are in this setting
obtained via covariant derivatives of the arguments, the curvature and
torsion, and invariant algebraic operations (see \cite[Chapter 28]{KMS}). 

We should also recall that
viewing the sections of natural bundles as functions in $C^\infty(\mathcal
G,\mathbb E)_{\operatorname{GL}(n,\mathbb R)}$, the covariant derivative
with respect to an affine connection is
simply given by differentiating in the direction of the constant fields
$\om^{-1}(X)$ for $X\in \mathbb R^n$ (notice, at a fixed frame $u$ in the
linear frame bundle $\mathcal G$ of the base manifold $M$, $X$ is then
identified with a tangent vector on $M$). 

For general Cartan geometries, the existence of natural covariant
derivatives on natural bundles is a subtle, but completely algebraic,
question with answers completely inherited from the Klein's models, see
\cite[Section 1.5.6]{parabook}.  The Riemannian geometry (with the three
different possible homogeneous models - the Euclidean, hyperbolic and
spherical spaceforms) is a special example of a geometry with 
reductive model ensuring the
unique normalized connection, the Levi Civita connection. 

Just in the case of $G$-modules, i.e., dealing with \emph{tractor bundles},
there is always the natural linear connection induced by the Cartan
connection itself on all of them. This follows from exactly the same 
arguments as in
\ref{homogeneous-bundles}, see also \cite[Section 1.5.7]{parabook}. 

The \emph{adjoint tractor bundle} $\mathcal A = \mathcal G\x_H\mathfrak g$
provides an extremely important example. The short exact sequence of
$H$-modules $0\to \mathfrak h\to \mathfrak g\to \mathfrak g/\mathfrak h\to 0$
gives rise to short exact sequence of natural bundles
$$
0\to\mathcal G\x_H \mathfrak h \to \mathcal A\to TM\to 0 
.$$ 
A straightforward check reveals that the curvature of the Cartan connection
descends to a two-form in $\Om^2(M;\mathcal A)$. See \cite[Section
1.5.7]{parabook} for further properties and details.  

\subsection{Grassmannian geometries} The Cartan geometries modeled over the
$(m,n)$-Grassmannians are examples of G-structures defined by the reduction
of the structure group $S(GL(m,\mathbb R)\x GL(n,\mathbb R))\subset
GL(m+n,\mathbb R)$, as explained in \ref{Grass_ex}. We call them almost
Grassmannian geometries and they are equivalently defined by identifying $TM$
with tensor product of the two auxiliary vector bundles $EM$ and $F^*M$ of
dimensions $m$ and $n$, respectively, together with the identification of
the top degree forms on $E$ and $F^*$, again as discussed in \ref{Grass_ex}. 

As before, we may assume $m\le n$. The projective geometries correspond to
$m=1$ and the Cartan curvature has got values in $\mathfrak p$, i.e., there
is no torsion. If $m=n=2$, we deal with the split signature conformal
Riemannian geometries in dimension $4$, and there are two curvature
components there (again no torsion). 

If $m=2<n$, then there is one torsion
and one curvature there and the geometries without the torsion are higher
dimensional analogues of the self-adjoint 4-dimensional conformal structures.
Dealing with the quaternionic real form of the same complexified algebras we
arrive at the (almost) quaternionic geometries. 

If $2<m\le n$, the almost Grassmannian geometries come with two torsion
components. The special cases of $m=n$ are of special interest, since they
are another promising generalization of the 4-dimensional conformal
geometries. 

In all the above cases, the identification of the top degree forms on $EM$
and $FM$ implies that the classical results from the tensorial invariant
calculus may be employed for the two sets of abstract in indices describing
fields in $\mathcal E^{A\dots BC'\dots D'}_{E\dots FG'\dots H'}[w]$.  

\subsection{Invariant operators and jet prolongations}
In the Klein's world of geometric analysis, the invariant operators can be viewed as 
natural transformations between the relevant jet prolongations of the
homogeneous bundles. This does not make sense now, because the
existence of curvature excludes or reduces the existence of 
(auto)morphisms of the Cartan geometries. 

It seems there are two options to move forward: either to exploit the
concept of the so called gauge-natural operators, see \cite[Chapter 12]{KMS}, i.e., we
would work over the category of principal bundles and gauge-natural bundles, 
and add the Cartan connection
$\om$ to the arguments of the operators, or we rather seek
ways, how to extend the operators at the Klein's model to the general cases.

In the first case, we mostly 
drastically reduce the choice of the natural bundles. Thus, we shall focus
on the second approach only.  
 
Perhaps the first idea should be to exploit the equivalence between the
invariant linear operators and $H$-module morphisms shown in Proposition
\ref{prop.1.3}. This happens to be a bit tricky, though.

Fortunately, we may mimic the idea of expressing the directional derivatives
via the actions of the constant fields $\om^{-1}(X)$ on the $H$-equivariant
functions. Indeed, for each Cartan connection $\om$ we obtain the so called
\emph{fundamental derivative} $D^\om$, which is an operation
\begin{equation}\label{fundamental-derivative}
C^\infty(\mathcal G, \mathbb E)_H\ni \si \mapsto D^{\om}(\si) =  
(X\mapsto \om^{-1}(X)\cdot \si)\in C^\infty(\mathcal G,\mathfrak g^*\otimes\mathbb
E)_H
,\end{equation} 
for all $X\in\mathfrak g$. Thus, the fundamental derivative is a natural differential 
operation mapping sections of $\mathcal E$
to sections of $\mathcal A^*\otimes \mathcal E$, which may be iterated. 
See \cite[Section
1.5.8]{parabook} for details and further properties.

Of course, there is a lot of redundancy there, since the derivatives in the
directions of $\om^{-1}(X)$, with $X\in\mathfrak h$, act algebraically. Let us
look at the situation at the level of $H$-modules. The value of the
fundamental derivative $D^\om \si$, together with the value $\si$, can be
understood as a couple $(v,\ph)\in\mathbb E\oplus \mathfrak
g^*\otimes\mathbb E$. The action of $h\in\mathfrak h$ is
$$
h\cdot(v,\ph) = \bigl(h\cdot v, X\mapsto
h\cdot\ph(\operatorname{Ad}_{h^{-1}}X)\bigr)
.$$
Comparing this with the natural action on the first jet prolongation
$J^1\mathbb E$, we can see that actually $J^1\mathbb E$ is naturally
embedded in $\mathbb E\oplus\mathfrak g^*\otimes\mathbb E$ as the
$H$-submodule consisting of couples $(v,\ph)$ with 
$\ph(Z)=-Z\cdot v$, for all $Z\in\mathfrak h$ (which perfectly mimics the
fact that $D^{\om}\si(Z) = -Z\cdot \si$ for such $Z$). 

Thus, the fundamental
derivative provides the universal first jet prolongation of sections.
If we choose a complementary subspace $\mathfrak g_-$ identified with the
quotient $\mathfrak g/\mathfrak h$, we can restrict $D^\om$ to $\mathfrak
g_-$. Then, exactly as for the Klein's model, we arrive at
$$
J^1\mathcal E\simeq \mathcal G\x J^1\mathbb E
,$$
and the universal differential operator $\si\mapsto j^1\si$ defined by the
restriction of the fundamental derivative to $\mathfrak g_-$. 

In particular, we have verified that each $H$-module homomorphism defining
an invariant first order linear operator on the Klein's model $G/H$ 
directly extends to an invariant
linear differential operator on the category of Cartan connections of the
type $G/H$.

\subsection{Higher order jets}\label{3.4}
We might repeat the argumentation from the previous paragraph and find the
second order prolongation module 
$$
J^2\mathbb E = \mathbb E + (
(\mathfrak g/\mathfrak h)^*\otimes \mathbb E) + (S^2(\mathfrak g/\mathfrak
h)^*\otimes \mathbb E)
$$ 
naturally embedded in the module
$$\mathbb E \oplus (\mathfrak g^*\otimes\mathbb E)\oplus (\mathfrak
g^*\otimes\mathfrak g^*\otimes \mathbb E)
$$ 
via the iterated action of the
fundamental derivative (notice, the first module is given as composition
series, while the second one is a direct sum). Thus, we arrived again at the
identification $J^2\mathcal E=\mathcal G\x_H J^2\mathbb E$ for all
$H$-modules $\mathbb E$. Now, 
every second order invariant (linear) operator on the 
Klein's
model $G/H$ corresponds to a $\mathfrak h$-module homomorphism $\Phi$ on the
relevant jet prolongations $J^\mathbb E$. Thus, each such operator 
extends to an invariant operator between the corresponding
natural bundles in the entire category of Cartan connections of this type,
by exploiting the iterated fundamental derivative and using the same $\Phi$. 

However, this fails for orders bigger than two. The reason is explained in
\cite[Section 1.5.10]{parabook} -- the iterated fundamental derivative
$D^\om$ always defines and injective universal operator 
$$
J^r\mathcal E\to \oplus_{j=0}^r S^j\mathcal A^*\otimes\mathcal E
,$$
but for $r\ge 3$, $J^r\mathcal E$ is not naturally identified with the
associated bundle $\mathcal G\x_H J^r\mathbb E$.

We may iterate $J^1(\dots J^1(J^1\mathcal E))$, and although this involves unnecessary
redundancies, these can be removed by the following well known categorical
construction.

\begin{definition*}
The semiholonomic jets $\bar J^k\mathcal E$ are inductively 
defined as the equalizer of all the natural 
projections $J^1(\bar J^{r-1}\mathcal E)\to \bar J^{r-1}\mathcal E$,
starting with $\bar J^1\mathcal E=J^1\mathcal E$. 

For every $H$-module $\mathbb E$, we define its semiholonomic jet
prolongation as the $H$-module $\bar J^r(G\x_H\mathbb E)_O$, i.e., the fiber
over origin of the semiholonomic prolongation of $\mathcal E$ over $M=G/H$.

By the very construction, $\bar J^r\mathcal E\simeq \mathcal G\x_{H}\bar J^r$, and the iterated fundamental derivatives define the
universal differential operator $\mathcal E \to \bar J^r\mathcal E$.
\end{definition*}   

In particular, $\bar J^2\mathcal E$ is the equalizer of the two natural
projections appearing as the value of the functor $J^1$ on the projection
$J^1E\to \mathcal E$, and the projection $J^1(J^1\mathcal E) \to J^1\mathcal
E$.  

The modules defining the semiholonomic jet prolongations as natural bundles are 
$$
\bar J^r\mathbb E= \mathbb E + (\mathfrak g/\mathfrak h)^*\otimes
\mathbb E + \dots + \otimes^r(\mathfrak g/\mathfrak h)^*\otimes
\mathbb E 
$$
which is a composition series (the right ends are $H$-submodules) with quite
wild action of $H$.

Obviously we have got a one-way analogy of the Proposition \ref{prop.1.3}:

\begin{prop*}
Each non-zero $H$-module homomorphism $\Ph :\bar J^k\mathbb
E\to \mathbb F$ defines invariant linear differential operators 
$D:\Ga(\mathcal E)\to
\Ga(\mathcal F)$ of order at most $k$.
\end{prop*}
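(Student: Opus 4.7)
The plan is to realise $D$ as the composition of two natural operations, reducing the proposition to the material set up in Subsection~\ref{3.4}. First, since the associated bundle construction is a functor from the category of $H$-modules to the category of natural vector bundles over $M$, the $H$-module homomorphism $\Ph$ induces, under the identifications $\bar J^k\mathcal E\simeq \mathcal G\x_H\bar J^k\mathbb E$ and $\mathcal F\simeq \mathcal G\x_H\mathbb F$, a vector bundle morphism $\tilde\Ph:\bar J^k\mathcal E\to \mathcal F$ over $\id_M$.

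Second, the very construction of $\bar J^k\mathcal E$ as an iterated equaliser in Subsection~\ref{3.4} furnishes the universal differential operator $\bar j^k:\Ga(\mathcal E)\to \Ga(\bar J^k\mathcal E)$, built from the iterated fundamental derivative \eqref{fundamental-derivative}. I would then set
$$
D := \tilde\Ph\o \bar j^k,
$$
which is manifestly linear. Naturality of $D$ under morphisms $\ph:\mathcal G\to \mathcal G'$ of Cartan geometries of type $G/H$ is automatic: $\tilde\Ph$ is natural because associated bundles are functorial, and $\bar j^k$ uses only the constant fields $\om^{-1}(X)$ and the equivariant function picture of sections, both preserved by $\ph^*\om'=\om$.

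The only step needing actual argument, and the one I expect to be the main obstacle, is the verification that $D$ has order at most $k$ in the classical sense, i.e.\ that $D\si(x)$ depends only on the holonomic jet $j^k_x\si$. For this I would argue inductively on $k$. The fundamental derivative $D^\om\si$ evaluated at $u\in\mathcal G$ reads off the $1$-jet of the equivariant function $\si$ at $u$, hence depends only on $j^1_{p(u)}\si$ on the base; iterating, the $k$-fold iterate depends only on $j^k_{p(u)}\si$. Equivalently, $\bar j^k$ factors through the ordinary jet prolongation $j^k:\Ga(\mathcal E)\to \Ga(J^k\mathcal E)$ via a canonical natural bundle map $J^k\mathcal E\to \bar J^k\mathcal E$. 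Composing further with $\tilde\Ph$ exhibits $D$ as a vector bundle morphism $J^k\mathcal E\to\mathcal F$ applied to $j^k\si$, which is precisely the definition of a linear differential operator of order at most $k$. The subtlety here is that $\bar J^k\mathcal E$ is in general genuinely larger than $J^k\mathcal E$, so one has to check that the iterated fundamental derivative, although generally asymmetric, still produces something determined pointwise by the ordinary holonomic jet; this is what the short induction above delivers.
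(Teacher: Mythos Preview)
Your argument is correct and follows exactly the route the paper intends: compose the universal semiholonomic jet operator $\bar j^k$ (built from iterated fundamental derivatives) with the bundle morphism induced by $\Ph$. The paper in fact offers no proof at all beyond the word ``Obviously'' and the preceding setup of $\bar J^k\mathcal E\simeq\mathcal G\x_H\bar J^k\mathbb E$ together with the universal operator, so your write-up simply supplies the details the paper leaves implicit, including the (straightforward) check that the iterated fundamental derivative has order at most $k$.
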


Notice that the opposite implication fails in general because the image of
the universal operator $\mathcal E\to\bar J^k\mathcal E$ is an algebraic
subvariety in the target and there are counterexamples of operators defined
by a morphism on the image of the universal operator (restricted to the
fiber over the origin in the model), but not extending to a genuine $H$-module
morphism on the entire $\bar J^k\mathbb E$. We shall comment more on this
phenomenon later.
 
\subsection{Semiholonomic induced modules}\label{3.5}
Exactly as in the Klein's model case, we better look at the dual picture.
Here we follow \cite{ES}, where the basic concepts were defined first. Although only the conformal Riemannian structures and the relevant operators
and (semiholonomic) Verma modules were discussed in
\cite{ES}, many steps can be employed in general, without any modification. 

As we have seen, the role of the
left invariant vector fields are for Cartan geometries played by the constant
vector fields $\om^{-1}(X)\in\mathcal X(\mathcal G)$, $X\in \mathfrak g$.  
Differentiating
the $H$-equivariant functions $\si:\mathcal G\to \mathbb E$ in the 
direction of $\om^{-1}(X)$ yields the fundamental derivative of the
sections. More precisely, if $X\in\mathfrak h$, then $(\om^{-1}(X)\cdot\si)(u) =
-X\cdot (\si(u))$ by the equivariance and, thus, the genuine differential
parts are again in the quotient $\mathfrak g/\mathfrak h$, 
thus corresponding to
derivatives of the sections in directions in $T_OM$.

Next, consider again a `word' $X_1X_2\dots X_k$ of elements in 
$\mathfrak g$ and
the corresponding differential operator $\si\mapsto
\om^{-1}(X_1)\o\om^{-1}(X_2)\o\dots\o\om^{-1}(X_k)\cdot\si(u)$ on the
functions, evaluated in a frame $u\in\mathcal G$. 

We may consider this operation as defined on the tensor algebra $T(\mathfrak
g)$ and again, there is the ideal $\mathcal I$ in $T(\mathfrak g)$ generated by the
expressions $X\otimes Y-Y\otimes X - [X,Y]$, with $X, Y\in \mathfrak g$, but
at least one of them in $\mathfrak h$, which acts trivially. This is the
consequence of the fact, that the curvature of the Cartan connection $\om$
vanishes if one of the arguments is vertical.

The resulting quotient (left and right) $\mathfrak g$-module $\bar{\mathfrak
U}(\mathfrak g)
= T(\mathfrak g)/\mathcal I$ is called
the \emph{semiholonomic universal enveloping algebra} 
of the Lie algebra $\mathfrak g$. 

Next we want to understand the linear forms in the dual of the semiholonomic 
jet modules
$(\bar J^k\mathbb E)^*$. Exactly as with the induced modules, 
we differentiate functions also in the vertical
directions, and our values are in $\mathbb E$.  Thus we consider the
tensor product 
$$
\bar V(\mathbb E)=\bar{\mathfrak U}(\mathfrak g)\otimes_{\mathfrak
U(\mathfrak h)} \mathbb E^*
.$$ 
The space $\bar V(\mathbb E)$ clearly enjoys the
structure of a $(\mathfrak g,H)$-module (and $(\bar{\mathfrak U}(\mathfrak
g),H)$-module), and it is called the \emph{semiholonomic induced module} 
for the
$H$-module $\mathbb E$.

\begin{prop}\label{prop.3.6}
The induced module $\bar V(\mathbb E)$ is the space of all linear forms on
$\bar J^\infty\mathbb E$ which factor through some $\bar J^k\mathbb E$, 
i.e., depend on finite number of derivatives. There is the natural
surjection $\bar V(\mathbb E)\to V(\mathbb E)$.
\end{prop}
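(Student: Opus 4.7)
The plan is to follow the template of Proposition \ref{prop.1.5} as closely as possible, carefully tracking the two places where the semiholonomic setting diverges from the holonomic one. The first claim asserts that $\bar V(\mathbb E)$ identifies with the linear forms on $\bar J^\infty\mathbb E$ of finite order. First I would make the pairing explicit: a word $X_1\otimes\dots\otimes X_k$ in $T(\mathfrak g)$ together with a covector $\la\in\mathbb E^*$ acts on a section $\si:\mathcal G\to\mathbb E$ at a frame $u$ by $\langle\la,\om^{-1}(X_1)\o\dots\o\om^{-1}(X_k)\cdot\si(u)\rangle$, exactly as in the semiholonomic fundamental derivative setup of \ref{3.4}. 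The task is then to verify that the relations defining $\bar V(\mathbb E)$ are precisely those enforced by this pairing.

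Next I would check that the defining ideal $\mathcal I$ acts trivially. This is exactly where the semiholonomic story differs from the holonomic one: the bracket identity $[\om^{-1}(X),\om^{-1}(Y)]=\om^{-1}([X,Y])$ holds only when at least one argument is vertical (because the Cartan curvature vanishes on vertical inputs), so symmetrization is available only in such pairs. This matches the composition series for $\bar J^r\mathbb E$ involving $\otimes^r(\mathfrak g/\mathfrak h)^*\otimes\mathbb E$ rather than $S^r$. After picking a complement $\mathfrak g_-$ to $\mathfrak h$ in $\mathfrak g$, I would argue that every monomial can be brought to the form with all $\mathfrak g_-$-letters on the left and all $\mathfrak h$-letters on the right by iteratively bubbling $\mathfrak h$-letters rightward, picking up only lower-order correction terms from the brackets. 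Once on the right, the $\mathfrak h$-letters act algebraically on $\mathbb E^*$, so that tensorizing over $\mathfrak U(\mathfrak h)$ absorbs precisely these redundancies; what remains is the direct sum of $\otimes^k\mathfrak g_-\otimes\mathbb E^*$ for $k=0,1,\dots$, matching the dual of the composition series of $\bar J^k\mathbb E$ recalled in \ref{3.4}.

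For the surjection $\bar V(\mathbb E)\to V(\mathbb E)$, I would observe that the defining ideal of $\bar{\mathfrak U}(\mathfrak g)$ is contained in the one defining the full universal enveloping algebra, since in $\mathfrak U(\mathfrak g)$ the relation $X\otimes Y-Y\otimes X-[X,Y]$ is imposed for \emph{all} pairs, not only those with one argument in $\mathfrak h$. Hence there is a canonical quotient $\bar{\mathfrak U}(\mathfrak g)\to\mathfrak U(\mathfrak g)$ of bimodules, and tensoring on the right with $\mathbb E^*$ over $\mathfrak U(\mathfrak h)$ preserves surjectivity. Geometrically this mirrors the inclusion $J^k\mathcal E\hookrightarrow\bar J^k\mathcal E$, obtained because holonomic jets satisfy all the symmetries which the semiholonomic ones relax.

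The main obstacle I anticipate is the clean bookkeeping in the bubbling step: one must check that every element of $\bar V(\mathbb E)$ has a canonical representative as a sum of pure $\mathfrak g_-$-monomials tensored with a covector, and that the resulting map onto $\bigoplus_k\otimes^k\mathfrak g_-\otimes\mathbb E^*$ is an isomorphism. Unlike in the Poincar\'e-Birkhoff-Witt argument available for $\mathfrak U(\mathfrak g)$, the $\mathfrak g_-$-letters cannot be symmetrized among themselves, so one has to verify that no hidden relation among them is forced by the commutations involving $\mathfrak h$-letters --- precisely the content of keeping the structure semiholonomic rather than holonomic.
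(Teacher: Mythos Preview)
Your proposal is correct and follows essentially the same route as the paper: choose a complement $\mathfrak g_-$ to $\mathfrak h$, bubble the $\mathfrak h$-letters to the right using the available relations in $\mathcal I$, and absorb them algebraically via the tensor product over $\mathfrak U(\mathfrak h)$. Your write-up is in fact more complete than the paper's, since you spell out the surjection $\bar V(\mathbb E)\to V(\mathbb E)$ explicitly (via the inclusion of defining ideals), whereas the paper's proof does not address that clause of the statement at all.
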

\begin{proof}
As in the induced modules case, the claim follows from the construction of $\bar V(\mathbb E)$ and 
the fact that
choosing a complementary vector subspace to $\mathfrak h$ in $\mathfrak g$,
we can decompose all letters in our words $X_1\dots X_k$ above and, by the
equalities enforced by living in the quotient by the ideal, we may ``bubble'' 
the letters in $\mathfrak h$ to the very right. Once there, they act 
algebraically and, thus,
tensorizing over $\mathfrak U(\mathfrak h)$ we remove just all redundancies.
\end{proof}

Obviously again, $\mathbb E^*$ injects into $\bar V(\mathbb E)$, generates this
$\mathfrak g$-module, and there is the natural filtration
$$
\mathbb E^*=\bar V_0(\mathbb E)\subset \bar V_1(\mathbb E)\subset \dots
\subset \bar V_k(\mathbb E)\subset \dots\subset \bar V(\mathbb E)
$$
inherited from the filtration on $T(\mathfrak g)$.

Next, assume that there is a fixed complementary subalgebra $\mathfrak g_-\simeq
\mathfrak g/\mathfrak h$ to
$\mathfrak h\subset \mathfrak g$. Then the Poincar\'e-Birkhoff-Witt procedure
reveals that the graded semiholonomic universal algebra equals 
$\operatorname{gr}\bar {\mathcal U}(\mathfrak g)=T(\mathfrak g_-)$ as vector
space, while the graded semiholonomic induced module $\bar V$ is then, as a
vector space, isomorphic
to $\bar {\mathcal U}(\mathfrak g_-)\otimes_{\mathbb R} \mathbb E^*$. 
Moreover, there is the the following commutative 
diagram of short exact sequences:
\begin{equation}\label{jetdiagram}
\xymatrix@R=5mm@C=10mm{
0 \ar[r] 
& \bar V_{k-1}(\mathbb E) \ar[r] \ar[d]
& \bar V_k(\mathbb E) \ar[r] \ar[d] 
&\otimes^k(\mathfrak g/\mathfrak h)\otimes \mathbb E^* \ar[r] \ar[d]
& 0\\
0 \ar[r] 
& V_{k-1}(\mathbb E) \ar[r]
& V_k(\mathbb E) \ar[r]  
& S^k(\mathfrak g/\mathfrak h)\otimes \mathbb E^* \ar[r]
& 0
}
\end{equation}
where the most right vertical arrow is given by the symmetrization.
 
Next, notice the Frobenius reciprocity \ref{thm.1.6} works without any
change in the proof. 

\begin{prop}[Frobenius reciprocity, \cite{ES}]\label{prop.3.7}
For all finite dimensional representations $\mathbb E$ and $\mathbb F$ of $H$, 
there are the canonical isomorphisms 
$$
\operatorname{Hom}_H(\mathbb F^*,\bar V(\mathbb E)) =
\operatorname{Hom}_{(\bar{\mathfrak U}(\mathfrak g),H)}(\bar V(\mathbb
F),\bar V(\mathbb E)).
$$
\end{prop}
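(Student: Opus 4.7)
The plan is to mimic essentially verbatim the proof of Theorem~\ref{thm.1.6}, exploiting the fact that the only relations quotiented out of $T(\mathfrak g)$ in defining $\bar{\mathfrak U}(\mathfrak g)$ are those of the form $X\otimes Y - Y\otimes X - [X,Y]$ with at least one of $X,Y$ in $\mathfrak h$---which are precisely the relations invoked in the classical argument.

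First, I would build the map from right to left by restriction: given $\Phi\in\operatorname{Hom}_{(\bar{\mathfrak U}(\mathfrak g),H)}(\bar V(\mathbb F),\bar V(\mathbb E))$, I restrict to the generating $H$-submodule $\mathbb F^*\cong \bar V_0(\mathbb F)\subset \bar V(\mathbb F)$ to obtain an $H$-equivariant $\varphi:\mathbb F^*\to \bar V(\mathbb E)$. Then, for the reverse direction, starting from $\varphi$ I would first define $\tilde\Phi$ on the free tensor product $\bar{\mathfrak U}(\mathfrak g)\otimes \mathbb F^*$ by
$$
\tilde\Phi(x\otimes v)=x\otimes_{\mathfrak U(\mathfrak h)}\varphi(v),
$$
and the central check is that $\tilde\Phi$ descends to $\bar V(\mathbb F)=\bar{\mathfrak U}(\mathfrak g)\otimes_{\mathfrak U(\mathfrak h)}\mathbb F^*$. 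For $X\in\mathfrak h$ and $v\in\mathbb F^*$, one computes
$$
\tilde\Phi(X\otimes v-1\otimes X\cdot v)= X\otimes\varphi(v)-1\otimes \varphi(X\cdot v) = X\otimes\varphi(v)-1\otimes X\cdot\varphi(v)=0,
$$
using $H$-equivariance of $\varphi$ in the middle step and the defining tensor relation in $\bar V(\mathbb E)$ at the end. After this, the induced $\Phi$ is a left $\bar{\mathfrak U}(\mathfrak g)$-module map by construction, is $H$-equivariant because both $\varphi$ and the adjoint action on $\bar{\mathfrak U}(\mathfrak g)$ are, and mutual inversion of the two assignments is immediate since any such $\Phi$ is determined by its values on the generating subspace $\mathbb F^*$.

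The one point that could in principle obstruct the argument---and is the main thing to confirm---is whether the additional elements surviving in $\bar{\mathfrak U}(\mathfrak g)$, namely the non-collapsed brackets of pairs from $\mathfrak g_-$, might introduce ambiguities in the descent or spoil $H$-equivariance of the output. Neither happens: the well-definedness computation uses only the relations involving at least one element of $\mathfrak h$, and the construction never needs to commute two elements of $\mathfrak g_-$ past one another. Consequently the proof of Theorem~\ref{thm.1.6} transports unchanged to the semiholonomic setting.
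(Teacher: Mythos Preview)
Your proposal is correct and follows essentially the same approach as the paper: restriction in one direction, and in the other direction the formula $\Phi(x\otimes v)=x\otimes_{\mathfrak U(\mathfrak h)}\varphi(v)$ together with the well-definedness check for $X\in\mathfrak h$. Your additional remarks about mutual inversion and the irrelevance of the non-collapsed $\mathfrak g_-$-relations simply make explicit what the paper leaves implicit.
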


\begin{proof}
If we are given a homomorphism $\Ph\in 
\operatorname{Hom}_{(\bar{\mathfrak U}(\mathfrak g),H)}(\bar V(\mathbb
F),\bar V(\mathbb E))$,
we define $\ph:\mathbb F^*\to \bar V(\mathbb E)$ by restriction.

On the other hand, having a $\ph\in 
\operatorname{Hom}_H(\mathbb F^*,\bar V(\mathbb E))$, we define for all
$x\in\bar{\mathfrak U}(\mathfrak g)$ and $v\in \mathbb F^*$,
$$
\Phi(x\otimes v) = x \otimes_{\bar{\mathfrak U}(\mathfrak h)} \ph(v)
,$$
which extends linearly, if well defined. This is again checked by noticing 
that for all
$X\in\mathfrak h$ and $v\in\mathbb F^*$,
$$
\Ph(X\otimes v - 1\otimes X\cdot v) = X\otimes \ph(v) - 1\otimes \ph(X\cdot
v) = X\otimes \ph(v) - 1\otimes X\cdot\ph(v)
,$$ which completes the proof.
\end{proof}

\subsection{Lifting homomorphisms}\label{3.8} 
Let us restrict ourselves to the parabolic geometries, i.e., semisimple Lie
groups $G$ with parabolic subalgebras $P$, the generalized Verma modules, and
their semiholonomic versions.

While the homomorphisms between the generalized Verma
modules,
$$\operatorname{Hom}_{(\mathfrak U(\mathfrak
g),H)}(V(\mathbb F),V(\mathbb E))
$$ 
are often very well understood, very little is known about the spaces 
$$
\operatorname{Hom}_{(\bar{\mathfrak U}(\mathfrak
g),H)}(\bar V(\mathbb F),\bar V(\mathbb E))
$$ 
which we are interested in, now. 

The strategy proposed in \cite{ES} is to discuss the possible liftings of
the existing homomorphisms $V(\mathbb F)\to V(\mathbb E)$ to morphisms 
$\bar V(\mathbb F)\to \bar V(\mathbb E)$ with respect to the canonical
projection. Moreover, due to the Frobenius reciprocity, this is equivalent
to the search for the dashed $H$-module morphisms in the following commutative diagram:
\begin{equation}\label{liftingdiagram}
\xymatrix@R=5mm@C=20mm{
& \bar V(\mathbb E) \ar[d]
\\
\mathbb F^* \ar@{-->}[ur] \ar[r] 
& V(\mathbb E) 
}
\end{equation}
In turn, for irreducible modules $\mathbb E$, $\mathbb F$, 
this is equivalent to finding a highest weight vector in $\bar
V(\mathbb E)$ covering the relevant highest weight vector in $V(\mathbb
E)$.

Next, recall from \ref{2.4} that the order of homomorphism $\Ph:V(\mathbb F)\to
V(\mathbb E)$ is the lowest
$k$ such that $\Ph$ maps $\mathbb F^*$ into $V_k(\mathbb E)$. The order of
homomorphisms $\Ph$ between the semiholonomic Verma modules is defined in the same
way.

Then, again following the Klein's model case, the
symbol of $\Ph$ is
$$
\si(\Ph):\mathbb F^* \to \bar V_k(\mathbb E)\to \bar V_k(\mathbb E)/\bar
V_{k-1}(\mathbb E) = \otimes^k(\mathfrak g_-)\otimes\mathbb E^*
.$$ 

\begin{prop}[\cite{ES}]
A homomorphism $V(\mathbb F)\to V(\mathbb E)$ of order at most two 
always lifts to a homomorphism $\bar V(\mathbb F)\to \bar V(\mathbb E)$.
\end{prop}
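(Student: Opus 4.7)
By Frobenius reciprocity (Proposition~\ref{prop.3.7}), a lift $\bar\Ph:\bar V(\mathbb F)\to\bar V(\mathbb E)$ of $\Ph$ is equivalent to an $H$-module homomorphism $\bar\ph:\mathbb F^*\to\bar V(\mathbb E)$ whose composition with the canonical surjection $\bar V(\mathbb E)\to V(\mathbb E)$ equals the $H$-module homomorphism $\ph:\mathbb F^*\to V(\mathbb E)$ corresponding to $\Ph$. Since $\Ph$ has order at most two, $\ph$ factors through $V_2(\mathbb E)$, so it suffices to produce an $H$-equivariant section $\tau:V_2(\mathbb E)\to\bar V_2(\mathbb E)$ of the vertical projection $q$ in diagram~\eqref{jetdiagram} at $k=2$, and then set $\bar\ph:=\tau\o\ph$.

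The construction of $\tau$ rests on two inputs. First, at degree one the ideal $\mathcal I$ imposes no relations, so $\bar V_1(\mathbb E)=V_1(\mathbb E)$ and the leftmost vertical arrow in~\eqref{jetdiagram} is the identity. Second, the symmetrization $s:S^2(\mathfrak g/\mathfrak h)\hookrightarrow\otimes^2(\mathfrak g/\mathfrak h)$, $X\cdot Y\mapsto\tfrac12(X\otimes Y+Y\otimes X)$, is a canonical $H$-equivariant section of the projection $\pi$ appearing as the rightmost vertical arrow. Applying the five-lemma to the $H$-equivariant map $\bar V_2(\mathbb E)\to\pi^*V_2(\mathbb E)$, $z\mapsto([z],q(z))$, where $[z]$ denotes the class of $z$ in the top quotient $\otimes^2(\mathfrak g/\mathfrak h)\otimes\mathbb E^*$, shows that $\bar V_2(\mathbb E)$ is canonically isomorphic, as an $H$-extension of $\otimes^2(\mathfrak g/\mathfrak h)\otimes\mathbb E^*$ by $V_1(\mathbb E)$, to the pullback $\pi^*V_2(\mathbb E)$ of the bottom row along $\pi$. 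Under this identification, $\tau(x):=(s(p(x)),x)$, with $p:V_2(\mathbb E)\to S^2(\mathfrak g/\mathfrak h)\otimes\mathbb E^*$ the bottom quotient, furnishes the required section: both $p$ and $s$ are $H$-equivariant, and $q\o\tau=\id$ by the pullback description.

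The heart of the argument is thus the pullback identification $\bar V_2(\mathbb E)\cong\pi^*V_2(\mathbb E)$: the kernel of $q$ is isomorphic to the antisymmetric summand $\La^2(\mathfrak g/\mathfrak h)\otimes\mathbb E^*$, which the symmetric lift $s$ sidesteps. At order three and beyond, the analogous identification breaks down — PBW-type relations involving further commutators (within $\mathfrak g_-$ when the grading has depth at least two, and in all cases through the interaction with the $\mathfrak p_+$-action) enlarge the kernel of $\bar V_k\to V_k$, so a genuine obstruction to lifting appears. This is precisely the phenomenon that motivates the refined lifting machinery developed in the remainder of the section.
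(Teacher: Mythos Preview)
Your argument is correct and follows the same strategy as the paper: reduce the lifting problem, via Frobenius reciprocity, to the existence of an $H$-equivariant section of the projection $\bar V_2(\mathbb E)\to V_2(\mathbb E)$, and build that section from the symmetrization $S^2(\mathfrak g/\mathfrak h)\hookrightarrow\otimes^2(\mathfrak g/\mathfrak h)$. The only difference is one of packaging: the paper writes the section down explicitly as $XYe\mapsto\tfrac12(XY+YX+[X,Y])e$ and leaves the equivariance as a direct check, whereas you obtain the same map abstractly by identifying $\bar V_2(\mathbb E)$ with the pullback $\pi^*V_2(\mathbb E)$ via the five-lemma and then transporting $(s(p(x)),x)$ back. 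If you unwind your section through that isomorphism you recover exactly the paper's formula (the $\tfrac12[X,Y]$ term is precisely the $V_1$-correction needed so that $q(z)=XYe$ rather than $XYe-\tfrac12[X,Y]e$). Your pullback phrasing has the advantage of making it transparent \emph{why} equivariance is automatic, while the paper's explicit formula is what one actually uses in computations.
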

\begin{proof}
The claim is equivalent to the existence of an $H$-equivariant splitting of
the canonical projection $\bar V_2(\mathbb E)\to V_2(\mathbb E)$. Following
\cite{ES}, we define such a splitting by identity on $V_1\mathbb E=\bar
V_1\mathbb E$, and we determine it completely by mapping
$$
V_2(\mathbb E) \ni XYe \mapsto \frac12(XY + YX + [X,Y])e\in \bar V_2(\mathbb
E)
$$
for all $e\in \mathbb E^*$, $X,Y\in\mathfrak g$. Checking its equivariance
is straightforward.
\end{proof}

This simple proposition implies again that all first and
second order linear invariant operators extend canonically from the
Klein's models to the entire category of the corresponding Cartan
geometries. 


Let us now restrict to $|1|$-graded parabolic geometries, which is the case
with all our examples. Then our
definition of symbol is compatible with the homogeneities in terms of the
actions of the grading elements $E$ in $\mathfrak g_0$. Here we also enjoy the
following proposition. In general, we could also think about the finer filtering
of the induced modules governed by the action of $E$, as we are doing in the
filtering of the $\mathfrak g$-modules when viewed as $\mathfrak p$-modules. 

\begin{prop}[\cite{ES}]
For all $|1|$-graded parabolic geometries and 
irreducible $P$-modules $\mathbb E$, $\mathbb F$, the homomorphisms $V(\mathbb
F)\to V(\mathbb E)$, or $\bar V(\mathbb F)\to \bar V(\mathbb E)$, are
determined by their symbols.

If the homomorphism $\bar \Phi$ of the semiholonomic Verma modules covers
$\Phi$, then the symbol of $\Phi$ 
is obtained by symmetrization of the symbol of $\bar
\Phi$.

The order of the homomorphism is given as the difference of the actions of
the grading element $E\in\mathfrak g_0$ on $\mathbb F$ and $\mathbb E$.
\end{prop}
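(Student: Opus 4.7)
The plan is to use the $|1|$-grading hypothesis to decompose $V(\mathbb{E})$ and $\bar V(\mathbb{E})$ into $E$-eigenspaces, and then observe that the image of a highest weight vector must land in exactly one such eigenspace. First, recall that for a $|1|$-graded $\mathfrak{g}$ we have $\mathfrak{g}_- = \mathfrak{g}_{-1}$ abelian, with $E$ acting on it by $-1$. By Poincaré--Birkhoff--Witt (and its semiholonomic version, cf.\ the discussion preceding \eqref{jetdiagram}), both isomorphisms $V(\mathbb{E}) \simeq S(\mathfrak{g}_-) \otimes \mathbb{E}^*$ and $\bar V(\mathbb{E}) \simeq T(\mathfrak{g}_-) \otimes \mathbb{E}^*$ are $\mathfrak{g}_0$-equivariant, because $\mathfrak{g}_0$ normalises both $\mathfrak{g}_-$ and $\mathfrak{p}$ and preserves the PBW-degree. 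Since $E$ acts by the scalar $-j-\alpha(\mathbb{E})$ on the degree-$j$ summand and these scalars are distinct for distinct $j$, the order filtration splits canonically as a direct sum of $E$-eigenspaces.

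Next, to prove claims (1) and (3), use Frobenius reciprocity (Proposition \ref{prop.3.7}, Theorem \ref{thm.1.6}) to reduce any $\Phi$ to its restriction $\Phi|_{\mathbb{F}^*}$. Since $\mathbb{F}$ is irreducible, so is $\mathbb{F}^*$, and it is generated as a $P$-module by a highest weight vector $v$ of $E$-eigenvalue $-\alpha(\mathbb{F})$. Because $E\in\mathfrak{g}_0\subset\mathfrak{p}$ and $\Phi$ is $\mathfrak{g}$-equivariant, $\Phi(v)$ is an $E$-eigenvector of the same eigenvalue. By the splitting established above, $\Phi(v)$ lies entirely in the single graded piece of degree
\[
k \;=\; \alpha(\mathbb{F}) - \alpha(\mathbb{E}),
\]
which is therefore the order of $\Phi$ (forcing $k\geq 0$ for $\Phi\neq 0$), proving claim (3). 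Moreover, under the identification of this graded piece with the symbol target $V_k(\mathbb{E})/V_{k-1}(\mathbb{E})$ (respectively its semiholonomic analogue), $\Phi(v)$ coincides with $\sigma(\Phi)(v)$. Since $\Phi$ is determined by $\Phi(v)$, and the latter is read off from $\sigma(\Phi)$, the symbol determines the homomorphism, proving claim (1). The same argument applies verbatim to $\bar V(\mathbb{F})\to\bar V(\mathbb{E})$ with $S(\mathfrak{g}_-)$ replaced by $T(\mathfrak{g}_-)$.

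For claim (2), suppose $\bar\Phi$ covers $\Phi$ under the canonical projection $\bar V(\mathbb{E})\to V(\mathbb{E})$. Restricting diagram \eqref{jetdiagram} to the degree-$k$ filtration quotient shows that the induced map $\otimes^k(\mathfrak{g}_-)\otimes\mathbb{E}^* \to S^k(\mathfrak{g}_-)\otimes\mathbb{E}^*$ is exactly the canonical symmetrisation. Composing with $\sigma(\bar\Phi):\mathbb{F}^*\to\otimes^k(\mathfrak{g}_-)\otimes\mathbb{E}^*$ yields $\sigma(\Phi)$, which gives the stated symmetrisation relation.

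The main obstacle is the first step: one has to verify that the order filtration on $V(\mathbb{E})$ and $\bar V(\mathbb{E})$ truly splits as a direct sum of $\mathfrak{g}_0$-modules, and that the $E$-eigenvalues on distinct graded pieces are distinct. Both facts are particular to the $|1|$-graded setting (in a $|k|$-graded situation with $k\geq 2$ the PBW-degree is no longer captured by the $E$-eigenvalue alone); once they are secured, the rest of the argument is a direct weight count.
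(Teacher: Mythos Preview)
Your proof is correct and follows essentially the same approach as the paper: use the grading element $E$ to split the PBW filtration into distinct $\mathfrak{g}_0$-eigenspaces, observe via Frobenius reciprocity that the image of $\mathbb{F}^*$ (equivalently, of a highest weight vector) must land in a single such eigenspace, and read off both the order and the symbol from this; the symmetrisation claim then follows from the commutative diagram \eqref{jetdiagram}. The paper's proof is simply a terser rendering of the same argument.
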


\begin{proof}
The existence of the grading element in $\mathfrak g_0$ defining the
grading of $\mathfrak g$ shows that the highest weight vector determining a $k$th order
operator must sit in $\otimes^k(\mathfrak g/\mathfrak p)\otimes \mathbb
E^*$. Since $\mathbb F^*$ generates both $V(\mathbb F)$ and $\bar V(\mathbb
F)$, the homomorphisms are uniquely determined by the embeddings $\mathbb F^*$.   

The next claim is obvious from the commutative diagram \eqref{jetdiagram}. 

The final observation is clear since we deal with $|1|$-graded geometries,
so the degree $k$ of $S^k(\mathfrak g_-)\otimes\mathbb E$ must be just the
mentioned difference.
\end{proof}

\subsection{Curved translation principle}\label{curvedJZ}
We are going to show, that the translation principal extends to some extent
from the homogeneous case to the general curved Cartan geometries. The main
idea (introduced in \cite{ES}) is to show, that many translations of
morphisms $\Ph:V(\mathbb F)\to V(\mathbb E)$ which can be covered by
$\bar\Ph:\bar V(\mathbb F)\to \bar V(\mathbb E)$ lead to results which again
can be covered.  

A special case of the invariant operators are the so called splitting
operators used in \eqref{translation}. There, the embeddings $V(\mathbb
F')\to V(\mathbb F\otimes\mathbb W)$ and projections $V(\mathbb
E\otimes\mathbb W)\to V(\mathbb E')$ are morphisms, whose orders are given
by the relevant position of the dashed modules in the filtration of $\mathbb
W$. Thus, if the difference from the top or bottom, respectively, 
is at most two, we can be sure that the necessary splitting
will exist in the semiholonomic version as well. 

In the semiholonomic case, we either can assume that the filtration of
$\mathbb W$ is of length at most two, or we can restrict ourselves only to
submodules which are at most two steps from the highest component in the
filtration for the embeddings, and at most two steps from the bottom for the
projections.

Then, we can cover the translation from the Klein's model in the curved case
as seen in the next diagram:
$$
\xymatrix@R=5mm@C=6mm{
\bar V(\mathbb F') \ar[r] \ar[d] \ar@(ur,ul)@{-->}[rrrrr]^{\bar D'}
&\bar V(\mathbb F\otimes \mathbb W) \ar@{=}[r] \ar[d]
&\bar V(\mathbb F)\otimes\mathbb W^* \ar[d] \ar[r]^{\bar D\otimes 1}
&\bar V(\mathbb E)\otimes \mathbb W^* \ar@{=}[r] \ar[d]
&\bar V(\mathbb E\otimes \mathbb W) \ar[r] \ar[d]
&\bar V(\mathbb E') \ar[d]
\\
V(\mathbb F') \ar[r] \ar@(dr,dl)@{-->}[rrrrr]_{D'}
&V(\mathbb F\otimes \mathbb W) \ar[r]
&V(\mathbb F)\otimes\mathbb W^* \ar[r]^{D\otimes 1}
&V(\mathbb E)\otimes \mathbb W^* \ar[r] 
&V(\mathbb E\otimes \mathbb W) \ar[r] 
&V(\mathbb E')
}
$$

Summarizing, all homomorphism of Verma modules which 
can be obtained from 1st or 2nd order
morphisms by means of translation described in propositions \ref{transprop1}
and \ref{transprop2}, using only splitting operators of order at most two,
admit the covering by homomorphisms of semiholonomic Verma modules.
Moreover, the symbols of the covered homomorphisms are obtained by
symmetrizations of symbols of those covering ones.  

\subsection{Non-existence}\label{non-existence}
The (non)existence of homomorphisms of semi-holonomic Verma modules can be
sometimes seen directly from the semiholonomic jet module picture, as
discussed in \ref{3.4}. 

In general, the action of $\mathfrak g_1$ on $\bar J^k\mathbb E$ is
horrible, but the restriction of this action to $\otimes^{k-1}\mathfrak
g_-^*\otimes\mathbb E\subset \bar J^k\mathbb E$ is
relatively simple. For all $Z\in \mathfrak g_1$, $\ph\in 
\otimes^{k-1}\mathfrak g_-^*\otimes \mathbb E \subset \bar J^k\mathbb E$, and $X_1,\dots,X_k\in
\mathfrak g_{-1}$ we obtain
\begin{equation}\label{the_action}
\begin{aligned}
(Z\cdot \ph)(X_1,\dots,X_k) &= \sum_{i=1}^{k}\biggl([X_i,Z]\cdot
\ph(X_1,\dots^\wedge,X_k) 
\\
&\qquad - \sum_{j=1}^{i-1} \ph(X_1,
\dots,[[X_i,Z],X_j],\dots^\wedge,X_k) 
\biggr)
\end{aligned}
\end{equation}
where the wedges indicate the relevant omission of arguments.

Now, if there is a $\mathfrak p$-homomorphism $\Ph:\bar J^k\mathbb E\to
\mathbb F$, and both $\mathbb E$ and $\mathbb F$ are irreducible, then
$\Ph$ must vanish on the image of the $\mathfrak p_+$ action. This simple
observation led to complete description of all first order operators in
\cite{SlSo}, for all parabolic geometries. 

The first step was very simple there: fixing the action of the 
semisimple part of $\mathfrak g_0$, and leaving the so called weight (i.e.,
the action of the center $\mathfrak z\subset \mathfrak g_0$) free, the above
condition restricted to the action $\mathfrak g_1$ on
$\mathbb E\subset \bar J^1\mathbb E$ reveals that only the derivatives in
the direction of the smallest distributions corresponding to $\mathfrak g_{-1}$
are feasible and it provides one linear constraint on 
the weight. Then
we can show that the operators exist for all such weights. 

In the one graded
case, this recovers the earlier known fact from conformal Riemannian geometry, 
that fixing the action of the
semisimple part of $\mathfrak g_0$ for $\mathbb E$ and $\mathbb F$, then
each invariant projection of $\mathfrak g_1\otimes \mathbb E$ to $\mathbb F$
yields an invariant operator for a unique weight of $\mathbb E$, cf.
\cite{Feg}.

The same approach gets very much more complicated for higher orders. 
We shall illustrate the procedures on
our simplest $(2|2)$-Grassmannian example, which will conclude our survey.

\subsection{4-dimensional conformal geometries} 
As exploited in \cite{ES}, 
all fundamental representations $\mathbb W$ in the conformal Riemannian
geometry, i.e., for the algebra $\mathfrak s\mathfrak o(n+1,1)$, are of
length at most two and so we obtain no restriction when using them in order
to move from the trivial representation to any other one. 

A particular case is our $(2,2)$-Grassmannian example. Dealing with the 
de~Rham pattern for the regular infinitesimal characters there
(see \eqref{2|2} and notice it contains only operators of order one or their
nontrivial compositions, except the
mysterious fourth order Paneitz operator not depicted there), we immediately
see, that the pattern remains the same for all regular characters. 
In particular, this shows that our description of all Verma module
homomorphisms for the
regular infinitesimal characters in the paragraph after \eqref{2|2} extend
to the curved 4-dimensional conformal geometries, except the Paneitz
operator. Let us discuss this closer now.

We shall use the usual Penrose abstract index
notation, as started in subsection \ref{index_notation}. Thus let write
$X=X^{A'}_A$, $Y=Y^{A'}_A\in \mathfrak g_{-1}$, $Z=Z^A_{A'}\in
\mathfrak g_1$, and consider the trivial
representation with weight $w$. 

We shall first recover the second order Yamabe operator from
\eqref{2|2-singular}. Then we need to compute the action of $Z$ on
$\ph=\ph^A_{A'}\in \mathfrak g_{-1}^*[w]$. In this case, \eqref{the_action}
becomes:
$$
(Z\cdot\ph)(X,Y) = [X,Z]\cdot \ph(Y) +[Y,Z]\cdot\ph(X) - \ph([X,Z],Y)
$$
and, since we deal with trivial representation of the semisimple
part of $\mathfrak g_0$, 
$$[X,Z]\cdot \ph(Y) = wZ(X)\ph(Y).
$$ 
Indeed, the
grading element $E$ acts on $X_A$ by 1/2, thus it acts on $X_{[A,B]}$ by 1, 
as anticipated, while the central part, i.e., the coefficient at the grading
element is obtained by the evaluation. 

Writing down the action of $\mathfrak g_1$ on the densities with
weight $w$ by means of the abstract indices, we arrive at
$\ph([[-,Z],-])^{AB}_{A'B'}=\ph^B_{A'}Z^{A}_{B'}+\ph^A_{B'}Z^B_{A'}$, and so
the action gets the following shape:
\begin{equation}\label{action_Yamabe}
Z^A_{A'}\cdot u^B_{B'} =
wZ^A_{A'}u^B_{B'}+wZ^B_{B'}u^A_{A'}-u^B_{A'}Z^A_{B'}-u^A_{B'}Z^B_{A'}
.\end{equation}
The potential second order operator valued again in densities is obtained
by antisymmetrization in both upper and lower indices. The condition that
the morphism must vanish on the entire image of the action says
$$
2(w+1)Z^{[A}_{[A'}u^{B]}_{B']} = 0
$$
and thus we obtain the value $w=-1$ as the right weight for the densities.
This homomorphism yields the famous Yamabe operator, the conformally
invariant version of the Laplace operator. Such natural operators can be
expressed by a universal formula in terms of any of the metrics in the
conformal class, and notice that even in the case of
the flat conformal sphere $S^4$, the Yamabe operator involves additional
lower order correction term to the Laplacian in the symbol. The reader can
find a detailed explanation of such phenomena for all parabolic geometries
in \cite{CS}. 

\subsection{The Paneitz operator} 
In principle, we should be able to continue along the same line of
arguments, look at image of the action of $\mathfrak g_1$ on $\otimes^3\mathfrak
g_1$ inside $\bar J^4(\mathbb E)$ for the trivial module $\mathbb E=\mathbb R$. 
But such an endeavor gets pretty complicated.  

Thus, it is time to switch to the dual picture.  In the holonomic Verma modules,
we are looking for the singular vectors with trivial weight, in the module
induced by the top-rank exterior forms $\mathbb F$, and we seek for them in
the top layer of $V_4(\mathbb F)$. Recall that we identify $\mathfrak g_{-1}$
with the matrices with 2 rows and columns, and let us write them as
$(y_{11},y_{12},y_{21},y_{22})$. 

A direct check reveals that the relevant singular vector for the Yamabe
operator is the determinant $y_{11}y_{22}-y_{12}y_{21}$, understood as an
element in $S^2(\mathfrak g_{-1})\otimes \mathbb R$ (with the right density
weight).  In our fourth order case, the only singular vector of the right
trivial weight is the square of the determinant (up to constant multiple, of
course).  The computations are tedious but straightforward, and they can be
nicely done, e.g., using Maple.

The situation gets much more complicated in the curved Cartan's worlds. We
want to cover the known singular vector. Since the $y_{ij}$'s do not commute
any more, we have to modify the formula for the determinant appropriately.
As with the jets, we simply consider complete antisymmetrizations of both
upper and lower indices. This gives us more terms than in the symmetric
case:
\begin{equation}\label{non_com_det}  
\frac12(y_{11}y_{22}-y_{21}y_{12} - y_{12}y_{21} + y_{22}y_{11})
\end{equation}
and taking the second power, we obtain 16 terms (instead of 4 in the
holonomic case). 

Now, the crucial observation is that there are actually three independent
options how to perform the two antisymmetrizations over 4 indices - we
choose the first couple and then continue with the remaining one. We
have to consider their linear combinations with the sum of coefficients
equal to one, in order to cover the symmetric singular vector. These are the
only $\mathfrak g_0$-highest weight vectors which project onto the fixed
symmetric singular vector.

Next, as a matter of fact (again quite straightforwardly computed, e.g., in
Maple) the action of the generator $z_{21}$ of $\mathfrak g_1$ is nontrivial
and identical
on all three of the options for the second power of determinant. Thus
this action will never vanish on any of our coverings of the symmetric
singular vector. 

This shows that there cannot be any semi-holonomic Verma module 
homomorphism providing the Paneitz operator. 

\subsection{Final remarks} Analogous result was proved for all the longest
operators in the de Rham pattern in conformal geometries of all even
dimensions $n\ge4$ in \cite{ES}. The semi-holonomic Verma module technique
was first developed there.  

Notice that actually there are the very
exceptional invariant operators extending the flat ones in the de Rham even
in the curved case. This can happen due to the fact that the image of the
universal semi-holonomic jet operator is an algebraic subvariety in the
total jet module, thanks to the Bianchi and Ricci identities for Cartan
connections and their differential consequences.

E.g., in the case of the original Paneitz operator, we simply use the same
formula as in the flat case and the restriction of the corresponding
$\mathfrak g_0$-module homomorphism to the image of the universal jet operator
happens to be equivariant. 

All the other operators in the $(2,2)$-Grassmannian pattern, singular or
regular can be obtained from the most simple ones: first order Dirac and
first order exterior differential. At the same time, all the fundamental
$G$-representations are coming with filtrations of length two or one, so we
can directly use Jantzen-Zuckermann procedure to get all patterns also in
the curved case (except the longest arrows). In higher conformal dimensions, the
Propositions \ref{transprop1}, \ref{transprop2} were necessary to built all
the other long arrows between the forms in the de Rham, see \cite{ES}.

The fourth order
generalized Paneitz operators for the quaternionic-like geometries 
are studied in detail in \cite{Nav,Nav24}. Again, in the de Rham, there are
still invariant operators extending the flat case, but they need a much more
careful approach, cf. \cite{Nav, Nav24}.

The case of the $(3,3)$-Grassmannians is discussed in great detail 
in the parallel paper by
the same authors, \cite{SlSo-prep}.

\end{document}